\newtheorem{teo}{Theorem}[section]
\newtheorem{prop}[teo]{Proposition}
\newtheorem{lem}[teo]{Lemma}
\newtheorem{coro}[teo]{Corollary}
\theoremstyle{definition}
\newtheorem{rem}[teo]{Remark}
\newtheorem{ejem}[teo]{Example}
\def\b{{\cal B}}
\def\b{{\cal B}}
\def\a{{\cal A}}
\def\k{{\cal K}}
\def\T{\mathbb T}
\def\D{\mathbb D}
\def\R{\mathcal R}
\def\bZ{\mathbb Z}
\def\bC{\mathbb C}
\def\C{\mathbb C}
\def\R{\mathbb R}
\def\bh{{\cal B}(H^2)}
\def\ran{{\rm{Ran\, }}}
\def\hc{H^\infty + C}
\def\noi{\noindent}
\newcommand{\ov}[1]{\overline{#1}}
\begin{document}

\title{\vspace*{0cm}Geometric significance of Toeplitz kernels.\footnote{2010 MSC.  Primary 58B20, 47B35 Secondary 47A63, 53C22.}}





\date{}
\author{E. Andruchow, E. Chiumiento and G. Larotonda \footnote{Supported by Instituto Argentino de Matem\'atica ``Alberto P. Calder\'on" (CONICET), Universidad Nacional de General Sarmiento, Universidad Nacional de La Plata and Agencia Nacional de Promoci\'on de Ciencia y Tecnolog\'ia (ANPCyT).}}

\maketitle

\begin{abstract}
Let $L^2$ be the  Lebesgue space of square-integrable functions on the unit circle.  We show that the injectivity problem for Toeplitz operators is linked to the  existence of geodesics  in the Grassmann manifold of $L^2$. We also investigate this connection  in the context of  restricted Grassmann manifolds associated to $p$-Schatten ideals and  essentially commuting projections. \footnote{Keywords: Toeplitz operator, injectivity, shift-invariant subspace, Grassmann manifold, geodesic, Schatten ideal, Sato Grassmannian, restricted Grassmannian, loop group}
\end{abstract}

\section{Introduction}\label{intro}

Let $L^p$ be the usual Lebesgue spaces of complex-valued functions on the unit circle $\T$. The Grassmann  manifold of $L^2$ is the set of all closed subspaces of $L^2$. This paper studies the relation between geodesics on the Grassmann  manifold of $L^2$  and the injectivity problem for Toeplitz operators.

To explain this relation, let $H^2$ be  the Hardy space of the unit circle. Recall that the injectivity problem for Toeplitz operators consists in looking for those symbols $\varphi \in L^\infty$ such that the Toeplitz operator $T_\varphi$ is injective. We relate it to the problem of finding a geodesic on the Grassmann manifold of $L^2$ which joins two subspaces of the form $\varphi H^2$ and $\psi H^2$, where $\varphi, \psi$ are invertible functions in  $L^\infty$. More precisely, we will prove that such a geodesic exists if and only if the Toeplitz operator $T_{\varphi \psi^{-1}}$ and its adjoint both have trivial kernel. 
Furthermore, we will see that these statements are also equivalent to the existence of a minimizing geodesic joining the given subspaces.

 The Grassmann manifold of an abstract Hilbert space  (i.e. the set consisting of all the closed subspaces) may be identified with the  bounded selfadjoint projections. It is an infinite dimensional  homogeneous space  which can be endowed with a Finsler metric by using the operator norm on each tangent space. Although it is complete with the corresponding rectifiable distance, there are subspaces in the same connected component that cannot be joined by a  geodesic (see e.g. \cite{jmaa}). This means that the Hopf-Rinow theorem fails for this manifold.  Nevertheless, much information of its geodesics and their minimizing properties are known. The first results date back to the works \cite{kov, cpr, pr}; both in the more general framework of selfadjoint projections in $C^*$-algebras. 
More recently, there has been progress about the structure of the geodesics in several Grassmann manifolds defined by imposing additional conditions on the subspaces; see for instance \cite{acd, al, alv}  for restricted Grassmann manifolds and \cite{al2} for the Lagrangian Grassmann manifold. 

In this paper, we turn to a more concrete setting by taking the Hilbert space $L^2$. This allows us to study the interplay between geodesics, 
functional spaces and operator theory.  In contrast to the invertibility problem for Toeplitz operators, little attention has been paid  in the literature to the injectivity problem until recent years.  Except for the works of \cite{clark, slee},  the problem remained untreated until the recent works \cite{MP05, MP10, MiP10} (see also the survey \cite{kernels}). Apart from being an interesting problem in operator theory, in these latter articles there are  relevant applications  to harmonic analysis, complex analysis and mathematical physics. 

The structure of this paper is as follows. In Section 2 we  give classical results on Hardy spaces, Toeplitz and Hankel operators to make the article reasonably self-contained. In Section 3 we prove the aforementioned relation between geodesics of the Grassmann manifold of $L^2$ and the injectivity problem (Theorem \ref{kernels geod}). Then, this result is used to derive an inequality involving the reduced minimum modulus of Toeplitz operators and the norm of a commutator (Theorem \ref{teo des toeplitz}).  

 In Section 4 we deal with  the compact restricted Grassmannian  (or Sato Grassmanian). This is a well-known  Banach manifold related to KdV equations and loop groups (see \cite{sw1, sw}). We need to consider the following two uniform subalgebras of $L^\infty$,  the continuous functions $C$ and the usual Hardy space $H^\infty$.  We show that a subspace $\varphi H^2$ belongs to the compact restricted Grassmannian if and only if $\varphi$ is an invertible function in the Sarason algebra $\hc$.   This  is the least nontrivial closed subalgebra lying between $H^\infty$ and $L^\infty$; it has also been extensively studied \cite{bs90, douglas, s73}.  The existence of geodesics in the  restricted Grassmannian between two subspaces $\varphi H^2$ and $\psi H^2$, $\varphi, \psi$ invertible functions  in $\hc$, depends only on the index of these functions (Theorem \ref{index geod}).  We also examine when a subspace $\varphi H^2$ can be written as $\varphi H^2=gH^2$, where $g$ is a  continuous unimodular function. These results can be carried out also in the setting of restricted Grassmannians associated to $p$-Schatten ideals by using the notion of Krein algebras defined in \cite{bks07}.

Section 5 focuses on   shift-invariant subspaces of $H^2$. Each shift-invaritant subspace can be expressed as $\varphi H^2$, where $\varphi$ is an inner function. We prove that the canonical factorization of $\varphi$ determines the class  where the subspace $\varphi H^2$ belongs (Theorem \ref{producto blaschke}). Based on the results on the injectivity problem mentioned above, we provide examples showing the existence or non existence of geodesics between shift-invariant subspaces.




\section{Background}\label{prelim}




For $1\leq p \leq \infty$, $L^p=L^p(\T)$ denotes the usual Lebesgue spaces of functions defined on the unit circle $\T$. The Hardy space $H^p$  ($1\leq p < \infty$) is the space of all analytic functions $f$  on   the disk $\D=\{ \, z \in \bC \, : \, |z|<1 \, \}$ for which
$$
\| f\|_{H^p}:= \left(\sup_{0<r<1}\frac{1}{2\pi} \int_0 ^{2\pi} |f(re^{it})|^p\, dt \right)^{1/p} < \infty.
$$
The space of all  bounded analytic functions on $\D$  with the norm $\|f\|_\infty=\sup_{z \in  \D}| f(z) |$ is the Hardy space $H^\infty$.  Functions in Hardy spaces have non tangencial limits a.e.,  a fact which is used to  isometrically identify these spaces with
$$
H^p=\{ \, f \in L^p \,  : \, \int_0^{2\pi} f(e^{it}) \ov{\chi_n(e^{it})} \, dt=0, \, n <0 \, \}.
$$
Here  $(\chi_k)_{k \in \bZ}$ denotes the  orthonormal basis of $L^2$ given by $\chi_k(e^{it})=e^{ikt}$. We shall mostly use this representation of Hardy spaces as functions defined on $\T$ and deal with the values $p=2, \, \infty$. In particular,  $H^2$ is a closed subspace of the Hilbert space $L^2$ and $H^\infty$ is a closed subalgebra of $L^\infty$. For background and notational purposes, our main references for this paper are the books by Douglas,  Nikol'ski\v{\i} and Pavlovi\'c \cite{niko86,nikolski,douglas,pavlovic}.

\medskip

A function $f \in H^2$ is called \textit{inner} if $|f(e^{it})|=1$ a.e. on $\T$. A function $f \in H^2$ is \textit{outer} if
$
\overline{\text{span}} \{ \, f \chi_n  \,  : \, n \geq 0  \, \}=H^2.
$
For each $f \in H^2$, $f \neq 0$, there exist an inner function $f_{inn}$ and an outer function $f_{out}\in H^2$ such that
$f=f_{inn}f_{out}$. This  is called the \textit{inner-outer factorization}, and it is unique up to a multiplicative constant.

The inner function can be further factorized.   For each $a \in \D\setminus \{ \, 0 \,\}$,  a Blaschke factor is given by
$$
b_a(z)=\frac{\overline{a}}{|a|} \frac{a-z}{1 - \overline{a}z}, \, \, \, \, z \in \D.
$$
When $a=0$, set $b_0(z)=z$. A  Blaschke product is a function of the form
$$
b(z)=\prod_{j=1}^n b_{a_j}(z), \, \, \, \, z \in \D,
$$
where $1 \leq n \leq \infty$. In the case where $n=\infty$, the infinite Blaschke product
is convergent on compact subsets of $\D$ if the sequence $\{ \, a_j \, \} \subseteq \D$ satisfies the Blaschke condition, that is,  $\sum_j(1 - |a_j|)< \infty$. A finite or infinite Blaschke product is an inner function with zeros given by $\{ \, a_j \, \}$. We remark that the zero set of a holomorphic function in $\D$ satisfies the Blaschke condition.

Let $\mu$ be  a positive finite measure on $\T$. Suppose in addition that $\mu$ is singular with respect to the Lebesgue measure, and set
$$
s_\mu(z)=\exp\left( - \int_{\T} \frac{\psi + z}{\psi -z} \, d\mu(\psi) \right), \, \, \, \, z \in \D.
$$
It turns out that $s_\mu$ is an inner function and $s_\mu(z) \neq 0$ on $\D$.
A function of this form is known as a singular inner function.

The \textit{canonical factorization} of a function $f\in H^p$ states that there exists a unique factorization $f=\lambda b s_\mu f_{out}$, where $\lambda \in \mathbb C$, $|\lambda|=1$, $b$ is a Blaschke product associated with the zero set of $f$, $s_\mu$ is a singular inner function and $f_{out}$ is the outer part of $f$.
\medskip

\medskip

Let $C$ denote the algebra of continuous functions on $\T$. The \textit{Sarason algebra} is the following algebraic sum
$$
H^\infty + C =\{ \, f+g  \, : \, f \in H^\infty, \, g \in C  \,   \}.
$$
It is proved that this is indeed a closed subalgebra of $L^\infty$. The harmonic extension $\hat{\varphi}$ to   $\D$ of a function $\varphi \in \hc$ is well-defined, and it plays a fundamental role in
 the characterization of invertible functions in this algebra. For $\varphi \in \hc$ and $0<r<1$,  set $\varphi_r(e^{it})=\hat{\varphi}(re^{it})$. Then $\varphi$ is invertible in $\hc$ if and only if there exist $\delta, \epsilon >0$ such that $|\varphi_r(e^{it})|\geq \epsilon$ for $1-\delta<r<1$ and $e^{it} \in \T$.

This criterion  allows to define the index of an invertible function in $\hc$. For a  non-vanishing function $\varphi \in C$, let $ind(\varphi)\in\mathbb Z$ be the index (or winding number) of $\varphi$ around $z=0$, which for differentiable $\varphi$ can be computed as
$$
ind(\varphi)=\frac{1}{2\pi i}\oint\frac{\varphi'}{\varphi}=\frac{1}{2\pi}\int_0^{2\pi}\frac{\varphi'(e^{it})}{\varphi(e^{it})}\, e^{it}dt.
$$
For $\varphi$ is invertible in $\hc$, set $ind(\varphi)=\lim_{r \to 1^-} ind (\varphi_r)$. This index is stable by small perturbations and it is an homomorphism of the invertible functions in $\hc$ onto the group of integers. The key property to prove these facts as well as the criterion for invertibility is that the harmonic extension is asymptotically multiplicative in $\hc$.

The largest  $C^*$-algebra of $\hc$ is the set of  \textit{quasicontinuous functions}
$$QC=(\hc) \cap (\overline{\hc})$$
Every unimodular $\theta \in QC$ is invertible in $\hc$. In \cite{s73} Sarason proved that each unimodular  function $\theta \in QC$ of index $n \in \bZ$ can be expressed as
$
\theta=\chi_n e^{i(u + \tilde{v})},
$
where $u,v$ are real functions in $C$ and $\tilde{v}$ stands for the harmonic conjugate of $v$ on $\T$.


\begin{rem}\label{index comput}
In the case where $\varphi$ is rational function without zeros and poles on $\T$, it is well known that $ind(\varphi)=z-p$, being
$z$ and $p$ the number of zeros and  poles of $\varphi$ in  $\D$, respectively.
 More interesting, when $\varphi$ is a unimodular function sufficiently regular (for instance if $\varphi$ is of bounded variation), the index of $\varphi$ can be computed using its Fourier coefficients $(\varphi_k)_{k \in \mathbb{Z}}$ as
$$
ind(\varphi)=\sum\limits_{k\in\mathbb Z}  k\, |\varphi_k|^2;
$$
see \cite{bourgain} and the references therein.
\end{rem}

\medskip

\noi \textbf{Operators on Hardy spaces.}
The space of bounded linear operators on a Hilbert  space $H$ to a Hilbert space $L$ is denoted by $\b(H,L)$ or $\b(H)$ if $H=L$. Let $H^2_-=\chi_{-1}\ov{H^2}$ be the orthogonal complement of the Hardy space $H^2$,  and consider the orthogonal projections $P_+$ and $P_-$ onto $H^2$ and $H^2_-$, respectively.
 Three special classes of bounded operators will be used in the sequel. For $\varphi \in L^\infty$, the \textit{multiplication operator} $M_\varphi \in \b(L^2)$, $M_\varphi f =\varphi f$, where $f \in L^2$; the \textit{Toeplitz operator} $T_\varphi \in \bh$, $T_\varphi f=P_+ (\varphi f)$, where $f \in H^2$; and the \textit{Hankel operator} $H_\varphi \in \b(H^2,H^2_-)$, $H_\varphi f=P_-(\varphi f)$, where $f \in H^2$.

\medskip

 Recall that the (unilateral) shift operator is given by $M_{\chi_1}$.
It will be useful to state some well-known results on invariant subspaces of the shift operator.

\begin{teo}\label{BHelson}
Suppose that $E$ is a closed subspace of $L^2$ and $M_{\chi_1}E \subseteq E$.
\begin{itemize}
\item[i)] (Wiener) If $E$ is doubly invariant (i.e. $M_{\chi_1}(E)=E$), then $E=\chi_R L^2$ for a unique measurable subset $R \subseteq \T$, where $\chi_R$ is the characteristic of $R$.
\item[ii)] (Beurling-Helson) If $E$ is singly invariant (i.e. $M_{\chi_1}(E)\neq E$), then $E=\theta H^2$ for a unique up to a constant $\theta \in L^\infty$ with $|\theta|=1$ a.e.
\item[iii)] If $0\neq E \subset H^2$, then $E=\theta H^2$ for some inner function $\theta$.
\end{itemize}
\end{teo}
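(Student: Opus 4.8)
The plan is to prove \textit{i)} first, deduce \textit{ii)} from it together with the Wold decomposition of an isometry, and then read off \textit{iii)} as a special case of \textit{ii)}. For \textit{i)}: the hypothesis $M_{\chi_1}(E)=E$ forces $M_{\chi_n}(E)=E$ for every $n\in\bZ$ (apply $M_{\chi_{-1}}=M_{\chi_1}^*$ and iterate), and since each $M_{\chi_n}$ is unitary it fixes $E^\perp$ as well; hence the orthogonal projection $P$ onto $E$ commutes with every $M_{\chi_n}$, therefore with $M_f$ for every trigonometric polynomial $f$, and — by uniform density of trigonometric polynomials in $C(\T)$ — with $M_f$ for every $f\in C(\T)$. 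The von Neumann algebra generated by $\{M_{\chi_n}:n\in\bZ\}$ is the maximal abelian selfadjoint algebra $\{M_f:f\in L^\infty\}$, whose commutant is itself, so $P=M_\varphi$ for some $\varphi\in L^\infty$; then $P=P^*=P^2$ gives $\varphi=\ov\varphi$ and $\varphi^2=\varphi$, hence $\varphi=\chi_R$ with $R=\{\varphi=1\}$ measurable, and $E=\ran M_{\chi_R}=\chi_R L^2$. Uniqueness of $R$ up to null sets is immediate, since $\chi_R L^2=\chi_{R'}L^2$ forces the two projections, hence $\chi_R$ and $\chi_{R'}$, to agree a.e. (One could also bypass the commutant theorem and argue directly with an essential support of $E$.)

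The core is \textit{ii)}. Since $M_{\chi_1}(E)\subseteq E$, the restriction $V=M_{\chi_1}|_E$ is an isometry of $E$, so the Wold decomposition applies: $E=\big(\bigoplus_{n\geq0}\chi_n F\big)\oplus E_\infty$, with wandering subspace $F=E\ominus\chi_1 E$ and $E_\infty=\bigcap_{n\geq0}\chi_1^n E$. Because $E$ is singly invariant, $\chi_1 E\neq E$, hence $F\neq\{0\}$; fix a unit vector $\theta\in F$. For any $\theta'\in F$ and $n\geq1$ one has $\PI{\theta}{\chi_n\theta'}=0$, since $\chi_n\theta'\in\chi_1 E\perp\theta$; taking complex conjugates this also holds for $n\leq-1$, so every nonzero Fourier coefficient of $\ov{\theta'}\theta$ vanishes and $\ov{\theta'}\theta\equiv\PI{\theta}{\theta'}$. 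Choosing $\theta'=\theta$ gives $|\theta|^2\equiv\|\theta\|_2^2=1$, i.e. $|\theta|=1$ a.e.; choosing $\theta'$ arbitrary gives $\theta'\in\bC\theta$, so $\dim F=1$ and $\bigoplus_{n\geq0}\chi_n F=\overline{\text{span}}\{\chi_n\theta:n\geq0\}=\theta H^2$, where $M_\theta$ is unitary because $|\theta|=1$. Finally $E_\infty$ is doubly invariant, so by \textit{i)} $E_\infty=\chi_R L^2$ for some measurable $R$; but $\theta\in F\perp E_\infty$ forces $\int_R|\theta|^2=0$, which with $|\theta|=1$ a.e. yields $|R|=0$ and $E_\infty=\{0\}$. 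Hence $E=\theta H^2$ with $|\theta|=1$ a.e.; and if also $E=\theta'H^2$ with $|\theta'|=1$, then $\theta=\theta'h$ and $\theta'=\theta h'$ with $h,h'\in H^2$ of modulus $1$, so $hh'\equiv1$, whence $\ov h=h'\in H^2$, $h\in H^2\cap\ov{H^2}=\bC$, and $\theta$ is unique up to a unimodular constant.

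Part \textit{iii)} is then immediate: if $0\neq E\subseteq H^2$, then $\bigcap_{n\geq0}\chi_1^n E\subseteq\bigcap_{n\geq0}\chi_1^n H^2=\{0\}$, so $\chi_1 E\neq E$ and $E$ is singly invariant; by \textit{ii)}, $E=\theta H^2$ with $|\theta|=1$ a.e., and since $\theta=\theta\cdot1\in E\subseteq H^2$ the function $\theta$ is inner. The step I expect to be most delicate is the vanishing of $E_\infty$ in \textit{ii)}: it works precisely because the wandering vector $\theta$ turns out to be unimodular and so cannot be orthogonal to a summand $\chi_R L^2$ unless $R$ is null. The Fourier-coefficient computation giving $|\theta|=1$ a.e. is the other point needing care, while the remaining steps are routine.
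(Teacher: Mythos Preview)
The paper does not prove this theorem: it is stated in Section~2 (``Background'') as a classical result, with the books of Douglas and Nikol'ski\v{\i} cited as general references, and no argument is given. So there is no proof in the paper to compare against.

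That said, your proposal is correct and is essentially the standard textbook proof one finds in those references. The MASA/commutant argument for \textit{i)} is the usual one; for \textit{ii)} the Wold decomposition combined with the Fourier-coefficient computation showing that any unit wandering vector is unimodular (and hence that the wandering subspace is one-dimensional) is exactly the classical route, and your use of \textit{i)} to kill the unitary part $E_\infty$ via $\int_R|\theta|^2=0$ is clean. The deduction of \textit{iii)} from \textit{ii)} is immediate as you wrote it. The phrase ``taking complex conjugates'' for the $n\le -1$ case is a little terse --- what you are really using is the symmetry $\PI{\theta}{\chi_n\theta'}=\overline{\PI{\theta'}{\chi_{-n}\theta}}$ together with the $n\ge 1$ case with the roles of $\theta,\theta'$ swapped --- but the step is correct.
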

\noi We will frequently use several properties of  Toeplitz operators.  Among the basic properties we recall that  $\|T_\varphi\|=\|\varphi\|_\infty$,  $T_\varphi ^*=T_{\ov{\varphi}}$ and $T_{\varphi \psi}=T_\varphi T_\psi$ whenever $\psi \in H^\infty$. The following  results will be useful.

\begin{teo} (Coburn's lemma)
If $\varphi  \in L^\infty$, then either $\ker(T_\varphi)=\{ 0 \}$ or $\ker(T_{\varphi}^*)=\{ 0 \}$, unless $\varphi\equiv 0$.
\end{teo}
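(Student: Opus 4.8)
The plan is to argue by contraposition: supposing that both $\ker(T_\varphi)$ and $\ker(T_\varphi^*)$ contain a nonzero vector, I will deduce $\varphi\equiv 0$. So fix $f\in\ker(T_\varphi)$ and $g\in\ker(T_\varphi^*)=\ker(T_{\ov\varphi})$ with $f\neq 0$ and $g\neq 0$ (both automatically in $H^2$). Unwinding the definitions, $T_\varphi f=P_+(\varphi f)=0$ means $\varphi f\in (H^2)^\perp=H^2_-$, i.e.\ the Fourier coefficients of $\varphi f\in L^2$ are supported in the negative integers; similarly $T_{\ov\varphi}g=P_+(\ov\varphi g)=0$ means $\ov\varphi g\in H^2_-$, and taking complex conjugates this says that $\varphi\ov g$ has Fourier coefficients supported in the positive integers.

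Next I multiply these two relations by $\ov g$ and $f$ respectively and compare. Since $\varphi\in L^\infty$ and $f,g\in L^2$, both $\varphi f$ and $\varphi\ov g$ lie in $L^2$, so the products $(\varphi f)\ov g$ and $(\varphi\ov g)f$ belong to $L^1$ and their Fourier coefficients are the (absolutely convergent) convolutions of the corresponding square-summable coefficient sequences. As $f\in H^2$ has spectrum in $\bZ_{\ge 0}$ and $\ov g$ has spectrum in $\bZ_{\le 0}$, the function $(\varphi f)\ov g$ has spectrum contained in $\bZ_{<0}+\bZ_{\le 0}\subseteq\bZ_{<0}$, whereas $(\varphi\ov g)f$ has spectrum contained in $\bZ_{>0}+\bZ_{\ge 0}\subseteq\bZ_{>0}$. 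But $(\varphi f)\ov g=\varphi f\ov g=(\varphi\ov g)f$ as elements of $L^1$, so this common function has empty spectrum and hence $\varphi f\ov g=0$ a.e.\ on $\T$.

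To conclude I invoke the boundary uniqueness theorem for Hardy spaces (see \cite{douglas,nikolski}): a function in $H^2$ that is not identically $0$ satisfies $\log|\cdot|\in L^1(\T)$, and in particular is nonzero a.e.\ on $\T$. Thus $f\neq 0$ a.e.\ and $g\neq 0$ a.e., so $\varphi f\ov g=0$ a.e.\ forces $\varphi=0$ a.e., that is, $\varphi\equiv 0$. This establishes the contrapositive and hence the lemma. The only genuinely analytic ingredient is the boundary uniqueness for $H^2$ used in the last step; the rest is bookkeeping of Fourier supports, and the one point deserving care is the legitimacy of computing the spectrum of the products $(\varphi f)\ov g$ and $(\varphi\ov g)f$ by convolution, which is valid precisely because every factor involved lies in $L^2$.
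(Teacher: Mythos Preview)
Your proof is correct and is in fact the standard argument for Coburn's lemma (essentially the one found in Douglas's book, which the paper cites). Note, however, that the paper itself does not prove this statement: it appears in Section~2 as a background result quoted without proof, so there is no paper proof to compare against. Your write-up is self-contained and the key steps---the Fourier-support bookkeeping for the $L^1$ product $\varphi f\ov g$ and the appeal to the boundary uniqueness theorem for $H^2$---are handled cleanly.
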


\begin{teo}\label{same prop toep}
Let $\varphi$ be a function in $L^\infty$. The following hold.
\begin{enumerate}
\item[i)]   $T_\varphi$ is invertible if and only if it is Fredholm and has index zero.
\item[ii)] If $\varphi \in \hc$, then $T_\varphi$ is Fredholm if and only $\varphi$ is invertible in $\hc$. Furthermore, the Fredholm index of $T_\varphi$ satisfies $ind(T_\varphi)=-ind(\varphi)$.
\end{enumerate}
\end{teo}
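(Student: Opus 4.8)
The plan is to dispatch (i) quickly with Coburn's lemma, and to base (ii) on the semicommutator identity for Toeplitz operators together with Hartman's theorem on compactness of Hankel operators; both statements are classical --- complete proofs are in the books of Douglas and Nikol'ski\v{\i} cited above --- and I would organize them as follows. For (i), only the implication ``Fredholm of index $0$ implies invertible'' requires an argument. So assume $T_\varphi$ is Fredholm with $ind(T_\varphi)=0$; then $\dim\ker T_\varphi=\dim\ker T_\varphi^{*}$, and $\varphi\not\equiv 0$ since $T_0=0$ is not Fredholm on the infinite--dimensional space $H^2$. By Coburn's lemma one of these kernels is $\{0\}$, so by the equality of dimensions both are, whence $T_\varphi$ is a bounded bijection of $H^2$ and the open mapping theorem provides a bounded inverse.

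For (ii), the two tools are the semicommutator identity $T_{\varphi\psi}-T_\varphi T_\psi=H_{\overline{\varphi}}^{*}H_\psi$, valid for all $\varphi,\psi\in L^\infty$ (verified directly from $P_+(\varphi\psi f)-P_+(\varphi P_+(\psi f))=P_+(\varphi P_-(\psi f))$ and $H_{\overline{\varphi}}^{*}=P_+M_\varphi|_{H^2_-}$), and Hartman's theorem that $H_\psi$ is compact precisely when $\psi\in\hc$. Writing $\pi$ for the quotient map onto the Calkin algebra $\bh/\k(H^2)$, these tools say that $\varphi\mapsto\pi(T_\varphi)$ is a unital algebra homomorphism on $\hc$, because for $\psi\in\hc$ the correction term $H_{\overline{\varphi}}^{*}H_\psi$ is compact. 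Consequently, if $\varphi$ is invertible in $\hc$, feeding $\varphi\varphi^{-1}=\varphi^{-1}\varphi=1$ into this homomorphism shows $\pi(T_\varphi)$ is invertible, i.e.\ $T_\varphi$ is Fredholm; and the same identity makes $\varphi\mapsto ind(T_\varphi)$ a group homomorphism from the invertibles of $\hc$ into $\bZ$.

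The substantial half is the converse: $T_\varphi$ Fredholm implies $\varphi$ invertible in $\hc$. I would prove it through the classical description $\hc=\overline{\bigcup_{n\geq0}\overline{\chi_n}\,H^\infty}$, after two preliminary observations. First, a Fredholm $T_\varphi$ forces $\varphi$ to be bounded below a.e.\ on $\T$: otherwise, using an outer function concentrated on a positive--measure set where $|\varphi|$ is tiny together with $\|T_\varphi f\|\le\|\varphi f\|_{L^2}$, one would contradict the lower bound that Fredholmness gives for $T_\varphi$ (if $\ker T_\varphi=\{0\}$), or for $T_\varphi^{*}=T_{\overline{\varphi}}$ in the remaining case allowed by Coburn. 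Second, since $T_{\chi_n}$ is an isometry with $T_{\overline{\chi_n}}T_{\chi_n}=I$, a \emph{fixed} parametrix $R$ of $T_\varphi$ yields $T_{\overline{\chi_n}}R$ as a parametrix of $T_{\chi_n\varphi}=T_\varphi T_{\chi_n}$, of norm at most $\|R\|$ uniformly in $n$; so there is a threshold $\rho>0$, independent of $n$, such that $\|h-\chi_n\varphi\|_\infty<\rho$ already makes $T_h$ Fredholm. Now pick $n\geq 0$ and $h\in H^\infty$ with $\|\chi_n\varphi-h\|_\infty$ smaller than $\rho$ and than the lower bound on $|\varphi|$, and let $h=\theta g$ be its inner--outer factorization. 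From $T_h$ Fredholm, $\overline{hH^2}=\theta H^2$ has finite codimension, so $\theta$ is a finite Blaschke product, and $hH^2=\theta gH^2$ being closed forces $g$ to be invertible in $H^\infty$. Hence $\overline{\chi_n}h=\overline{\chi_n}\,\theta\,g$ is a product of invertibles of $\hc$ --- the first two factors being unimodular continuous functions --- with inverse $\chi_n\overline{\theta}g^{-1}$ of sup--norm $1/\mathrm{ess\,inf}_{\T}|h|$, and the choice of threshold puts $\varphi$ strictly within the reciprocal of this norm of $\overline{\chi_n}h$, so $\varphi$ lies in the open set of invertibles of the Banach algebra $\hc$. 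The main obstacle is precisely this bookkeeping: keeping every constant, above all the admissible perturbation size, independent of the a priori unbounded integer $n$ --- which the uniform parametrix bound and the lower bound on $|\varphi|$ are designed to arrange.

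Finally, the index formula falls out of the same approximation. On one side, $ind(T_\varphi)=ind(T_{\overline{\chi_n}}T_h)=ind(T_{\overline{\chi_n}})+ind(T_h)=n-\deg\theta$, using multiplicativity of the Fredholm index, $ind(T_{\overline{\chi_n}})=n$, and $ind(T_h)=-\dim(H^2\ominus\theta H^2)=-\deg\theta$. On the other, $ind(\varphi)=ind(\overline{\chi_n})+ind(\theta)+ind(g)=-n+\deg\theta+0$: indeed $ind(\overline{\chi_n})=-n$, a finite Blaschke product of degree $m$ has index $m$ by Remark~\ref{index comput}, and an $H^\infty$--invertible $g=e^{\log g}$ has winding number $0$ at every radius, hence index $0$. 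Both quantities are unchanged under the small perturbation $\varphi\leadsto\overline{\chi_n}h$, since the Fredholm index is locally constant and $ind$ on invertibles of $\hc$ is stable by small perturbations (Section~\ref{prelim}). Therefore $ind(T_\varphi)=n-\deg\theta=-ind(\varphi)$.
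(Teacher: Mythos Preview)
The paper does not prove this theorem: it appears in Section~\ref{prelim} (Background) as a classical fact quoted without argument, with the references \cite{douglas,nikolski,bs90} standing in for a proof. So there is no ``paper's own proof'' to compare against; your proposal supplies what the paper deliberately omits.

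That said, your argument is correct and is essentially the textbook one. Part (i) via Coburn's lemma is exactly the standard proof. For part (ii), your use of the semicommutator identity plus Hartman to make $\varphi\mapsto\pi(T_\varphi)$ multiplicative on $\hc$ is the usual mechanism (this is Douglas's approach), and the hard direction---$T_\varphi$ Fredholm $\Rightarrow$ $\varphi$ invertible in $\hc$---is handled correctly. The uniform parametrix bound $\|T_{\overline{\chi_n}}R\|\le\|R\|$ is the right device to keep the perturbation threshold independent of $n$, and the inner--outer analysis of the approximant $h$ (forcing $\theta$ to be a finite Blaschke product and $g$ to be invertible in $H^\infty$) is accurate. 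One cosmetic point: in writing ``$g=e^{\log g}$'' you should not suggest $\log g\in H^\infty$ (the imaginary part need not be bounded); what you actually use, and what is true, is that $g$ is analytic and zero--free on $\D$, so each $g_r$ has winding number $0$, whence $ind(g)=0$. With that phrasing adjusted, the index computation $ind(T_\varphi)=n-\deg\theta=-ind(\varphi)$ is clean.
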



\section{The Grassmann manifold of $L^2$}

 Let $Gr$ be the Grassmann manifold of $L^2$, i.e. the set of all closed subspaces of $L^2$.  Let $P_W$ denote the orthogonal projection onto a closed subspace $W \subset L^2$. In particular, we write  $P_\varphi=P_{\varphi H^2}$,
when $\varphi \in L^\infty$ and $\varphi H^2$ is closed. If we  identify each subspace with its orthogonal projection, then
$$
Gr=\{ \, P_W\, : \, W \text{ is a closed subspace of  } L^2  \, \}.
$$
As an application of Theorem \ref{BHelson},  we  determine when $\varphi H^2$ belongs
to $Gr$.

\begin{lem}\label{fih2 closed}
Let $\varphi$ be a nonzero function in $L^\infty$. Then   $\varphi H^2$ is closed in $L^2$ if and only if $\varphi$ is invertible in $L^\infty$.
\end{lem}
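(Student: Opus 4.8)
The ``if'' direction is immediate: if $\varphi$ is invertible in $L^\infty$, then $M_\varphi$ is an invertible operator on $L^2$ with inverse $M_{\varphi^{-1}}$, hence a homeomorphism of $L^2$; since $H^2$ is closed, so is $\varphi H^2=M_\varphi(H^2)$. The substance is in the converse, and the natural tool is the Beurling--Helson theorem, already recorded as Theorem~\ref{BHelson}. So assume $\varphi H^2$ is closed. First I would check that $\varphi H^2$ is a nonzero, singly shift-invariant subspace of $L^2$. It is nonzero since it contains $M_\varphi 1=\varphi\neq 0$, and it is shift-invariant because $M_{\chi_1}(\varphi H^2)=\varphi(\chi_1 H^2)\subseteq\varphi H^2$. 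It is not doubly invariant: the map $M_\varphi\colon H^2\to\varphi H^2$ is a bounded bijection, being onto by definition and one-to-one because a nonzero function in $H^2$ cannot vanish on the positive-measure set $\{\varphi\neq 0\}$ (immediate from the canonical factorization recalled in Section~\ref{prelim}, the inner factors being unimodular a.e.\ and outer functions vanishing only on null sets); since $\varphi H^2$ is closed, hence complete, the open mapping theorem makes $M_\varphi$ a topological isomorphism. As multiplication operators commute, $M_\varphi$ conjugates the restriction of $M_{\chi_1}$ to $H^2$ (the unilateral shift, whose range is the proper subspace $\chi_1 H^2$) onto the restriction of $M_{\chi_1}$ to $\varphi H^2$, so the latter is not surjective and $\varphi H^2$ is singly invariant.

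By Theorem~\ref{BHelson}(ii) we get $\varphi H^2=\theta H^2$ for some $\theta\in L^\infty$ with $|\theta|=1$ a.e. Put $\eta:=\overline\theta\,\varphi\in L^\infty$; since $|\theta|=1$, the operator $M_{\overline\theta}$ is unitary on $L^2$, so $\eta H^2=\overline\theta(\theta H^2)=H^2$. Then $\eta=\eta\cdot 1\in H^2$, hence $\eta\in L^\infty\cap H^2=H^\infty$, and therefore $M_\eta$ leaves $H^2$ invariant and agrees there with the Toeplitz operator $T_\eta$. From $\eta H^2=H^2$ it follows that $T_\eta$ is onto and that $\eta h=1$ for some $h\in H^2$; the latter forces $\eta\neq 0$ a.e., so $T_\eta$ is also injective. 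Being a bounded bijection of $H^2$, $T_\eta$ is invertible in $\bh$, in particular Fredholm.

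Since $\eta\in H^\infty\subseteq\hc$ and $T_\eta$ is Fredholm, Theorem~\ref{same prop toep}(ii) forces $\eta$ to be invertible in $\hc$, hence in $L^\infty$; then $\varphi=\theta\eta$ is a product of invertible elements of $L^\infty$, so $\varphi$ is invertible, with $\varphi^{-1}=\eta^{-1}\overline\theta$. The one genuinely nontrivial point is the passage ``$\eta H^2=H^2$, $\eta\in H^\infty$'' $\Rightarrow$ ``$\eta$ invertible in $L^\infty$'': the hypothesis gives that $\eta$ does not vanish a.e., but \emph{not} that $1/\eta$ is bounded, so some operator-theoretic input is unavoidable. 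I invoked the Fredholm criterion of Theorem~\ref{same prop toep}, but one can stay elementary: from $T_\eta$ invertible and $\eta h=1$ with $h\in H^2$ one gets $\psi h=T_\eta^{-1}\psi$ for every $\psi\in H^\infty$, so by density $f\mapsto fh$ is bounded on $H^2$; hence $\|h^n\|_2\le\|T_\eta^{-1}\|^n$ and $\|h\|_{L^{2n}}\le\|T_\eta^{-1}\|$ for all $n$, and letting $n\to\infty$ yields $h=1/\eta\in L^\infty$. A smaller point requiring care is genuinely excluding the doubly invariant alternative in Theorem~\ref{BHelson}, which is exactly what the shift-conjugation argument above does.
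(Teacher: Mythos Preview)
Your proof is correct. The endgame---reducing to $\eta H^2=H^2$ with $\eta\in H^\infty$, deducing that $T_\eta$ is a bijection of $H^2$, and concluding that $\eta$ is invertible in $L^\infty$---is the same as the paper's (the paper simply cites \cite[Thm.~7.6]{douglas} for this last step; your appeal to Theorem~\ref{same prop toep}(ii) is heavier machinery, but your alternative $L^{2n}$ argument is a nice self-contained substitute).

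Where you genuinely diverge is in handling the doubly invariant alternative of Theorem~\ref{BHelson}. The paper argues by contradiction and splits into cases: it treats separately whether $\varphi$ vanishes on a set of positive measure and, within each case, whether $\varphi H^2$ is singly or doubly invariant, invoking Wiener's theorem in the doubly invariant branches and deriving contradictions from the structure $\varphi H^2=\chi_R L^2$. You instead observe that the closedness of $\varphi H^2$ plus the injectivity of $M_\varphi$ on $H^2$ make $M_\varphi:H^2\to\varphi H^2$ a topological isomorphism by the open mapping theorem, and then conjugate the shift to see that $M_{\chi_1}$ cannot be surjective on $\varphi H^2$. This is cleaner: it dispatches the doubly invariant case in one stroke, avoids Wiener's theorem and the case split on the zero set of $\varphi$, and yields a direct (non-contradiction) argument. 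The paper's approach, on the other hand, uses only Theorem~\ref{BHelson} and elementary properties of $H^2$ functions, without the open mapping theorem.
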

\begin{proof}
Clearly, if the function $\varphi$ is invertible in $L^\infty$, then the subspace $\varphi H^2$ is closed. Conversely, suppose that
$\varphi H^2$ is closed. We proceed by way of contradiction and assume that the function $\varphi$ is not invertible in $L^\infty$. We need to distinguish two cases.

In the first case, we assume that there is a Borel set $S \subset \T$ with positive measure such that $\varphi(e^{it})=0$ for all $e^{it} \in S$. Moreover, we may take $S$ to be a maximal set with this property.  Since $\varphi H^2$ is shift invariant, we further need to consider two cases according to whether $\varphi H^2$ is singly or doubly invariant. If $\varphi H^2$ is singly invariant, there is a function $\theta \in L^\infty$ such that $|\theta|= 1$ and $\varphi H^2=\theta H^2$. Then, there is a function $f \in H^2$ such that $\varphi f=\theta$, which is a contradiction since $\varphi\equiv 0$ in $S$. If $\varphi H^2$ is doubly invariant,  then there a Borel set $R\subset \T$ such that $\varphi H^2=\chi_R L^2$. Therefore, $\varphi f=\chi_R$ for some function $f \in H^2$. Recall that for a nonzero function in $H^2$, the set $\{ \, e^{it} \in \T \, : \, f(e^{it})=0\,  \}$ has measure zero (\cite[Thm. 6.13]{douglas}). Using the maximality of $S$, we find that the sets $S$ and $R^c$ must be equal with the
possible exception of points in a set of measure zero. Since $\varphi \neq 0$, $S^c=R$ has positive measure, and we can pick a proper subset $R_1\subset R$ such that $R\setminus R_1$ has positive measure. Again from the equation $\varphi H^2=\chi_R L^2$, we obtain
a nonzero function in $f \in H^2$ such that $\varphi f=\chi_{R_1}$. This implies that $f\equiv 0$ in $R\setminus R_1$, which contradicts the aforementioned property of functions in $H^2$.

In the second case, we suppose that $\varphi \neq 0$ a.e.. If the shift invariant subspace $\varphi H^2$ is doubly invariant, we have again that $\varphi H^2=\chi_R L^2$ for some Borel set $R\subset \T$. In particular, this gives $\varphi=\chi_R \, g$ for $g \in L^2$, and since $\varphi \neq 0$ a.e., it  follows that $\chi_R=1$. Thus, we get $\varphi H^2=L^2$, which certainly cannot be possible.
Next we assume that the subspace $\varphi H^2$ is single invariant. Then there is function $\theta \in L^\infty$ satisfying $|\theta|=1$ a.e. and $\varphi H^2=\theta H^2$. We may rewrite this as $\varphi_1 H^2=H^2$, where $\varphi_1=\overline{\theta}\varphi$. Note that $\varphi_1$ is not invertible in $L^\infty$, and  $\varphi_1  \in H^2$, which gives $\varphi_1 \in H^\infty$. Using this fact and that $\varphi_1 \neq 0$ a.e., the Toeplitz
operator $T_{\varphi_1}$ turns out to be injective. Moreover, $T_{\varphi_1}H^2=\varphi_1 H^2=H^2$ shows that $T_{\varphi_1}$ is invertible, and consequently, $\varphi_1$ must be invertible in $L^\infty$ \cite[Thm. 7.6]{douglas}. This gives a contradiction.
\end{proof}



Let $\a$ be an abstract $C^*$-algebra. Denote by  $Gr(\a)$  the Grassmann manifold of $\a$, i.e.  the set of all selfadjoint projections in $\a$.
In \cite{pr, cpr}, Corach, Porta and Recht decribed the differential geometry of $Gr(\a)$ in terms of projections and symmetries: one passes from projections to symmetries via the affine map
$$
P\longleftrightarrow \epsilon_P=2P-1.
$$
In \cite{cpr}  a natural reductive structure was introduced in $Gr(\a)$. In particular, geodesics were characterized. In \cite{pr}  it was proved that these geodesics have minimal length, if one measures the length of curves by
$$
L(\alpha)=\int_0^1 \| \dot{\alpha}(t) \| \, dt,
$$
where $\alpha: [0,1] \to Gr(\a)$ is a piecewise $C^1$-curve and $\| \, \cdot \,\|$ is the  norm of $\a$.
This means that the operator norm induces a  Finsler metric on $Gr(\a)$; however, note that this metric is not smooth, nor convex.
Let us summarize these facts in the following remark.

\begin{rem}\label{geometria de subespacios}
The Grassmann manifold $Gr(\a)$ is a complemented submanifold of $\a$. Its tangent space $(T Gr(\a))_P$ at $P$ is given by
$$
(T Gr(\a))_P=\{\, Y=iXP-iPX \, : \, X\in \a, X^*=X \, \},
$$
which consists of selfadjoint operators  which are co-diagonal with respect to $P$ (i.e. $PY P=(I-P)Y(I-P)=0$). Denote by $\a_h$ the space of selfadjoint elements of $\a$.
A natural projection $E_{S}:\a \to (T Gr(\a))_P$ is given by
$$
E_P(X)=\hbox{co-diagonal part of }X=P X(I-P)+(I-P)XP.
$$
This map induces a linear connection in $Gr(\a)$: if $X(t)$ is a tangent field along a curve $\alpha(t)\in Gr(\a)$,
$$
\frac{D X}{d t}=E_\alpha({X}).
$$
The geodesics of $Gr(\a)$ starting at $P$ with velocity $Y$ have the form $\delta(t)=e^{t\tilde{Y}}Pe^{-t\tilde{Y}}$, where
$\tilde{Y}=[Y,P]$ is antihermitian and co-diagonal with respect
to $P$.

Let $P$, $Q$ be two orthogonal projections such that $\|P-Q\|<1$. Then there exists a unique operator $X\in\a_h$, with $\|X\|<\pi/2$, which is co-diagonal with respect
to $P$, such that $Q=e^{iX}Pe^{-iX}.$ The curve
\begin{equation}\label{geo g}
\delta(t)=e^{itX}Pe^{-itX}
\end{equation}
 is the unique  geodesic of $Gr(\a)$ joining $P$ and $Q$
(up to reparametrization). Moreover, this geodesic has minimal length. The exponent $X$  is an analytic function of $P$ and $Q$:
$$
X=-\frac{i}{2}\log(\epsilon_p\epsilon_Q),
$$
which is an analytic logarithm because $\|\epsilon_P\epsilon_Q-1\|=\|\epsilon_P-\epsilon_Q\|=2\|P-Q\|<2.$
\end{rem}
More recently, necessary and sufficient conditions were given for the existence of a geodesic  joining two given orthogonal projections in the Grassmann manifold $Gr(H)$ of a Hilbert space $H$. This includes the case in which $\|P-Q\|=1$.
To briefly describe this result, let us recall that Halmos \cite{halmos} (see also \cite{davis, dix}) proposed to understand the geometric properties of  two orthogonal projections $P$ and $Q$ by considering  the decomposition
$$
(\ran(P)\cap \ker(Q)) \, \oplus \, (\ran(Q)\cap \ker(P)) \,\oplus  \, (\ran(P)\cap \ran(Q)) \, \oplus \, (\ker(P)\cap \ker(Q)) \, \oplus \, H_0,
$$
where $H_0$ is the orthogonal complement of the first four subspaces. The projections are said to be in \textit{generic position} when the first four subspaces are trivial. The first two subspaces may be interpreted as an obstruction to find a geodesic between $P$ and $Q$.

\begin{rem}\label{exp geod}
 It was proved in \cite{jmaa} (see also \cite{monteiro})  that there is a   geodesic (equivalently a minimal geodesic) in $Gr(H)$ joining $P$ and $Q$ if and only if
$$
\dim \ran(P)\cap \ker(Q)= \dim \ran(Q)\cap \ker(P).
$$
If both dimensions  are equal to zero, then
there exists a unique geodesic of minimal length in $Gr(H)$ joining $P$ and $Q$. This geodesic has the same form as in (\ref{geo g}) for a (unique) selfadjoint operator $X$ satisfying $\| X\|\leq \pi/2$. In particular, note that there can be a unique minimizing geodesic even if $\|P-Q\|=1$. If the above dimensions do not coincide, then there are infinitely many geodesics.
\end{rem}

Returning to the study of subspaces of the form $\varphi H^2$, we recall   a well-known argument to reduce the injectivity problem of a Toeplitz operator with a general symbol to another one with unimodular symbol.
Suppose that $\varphi$ is an invertible function in $L^\infty$. Then there exists a  function $\theta \in L^\infty$, $|\theta|=1$ a.e., such that
$\varphi H^2=\theta H^2$. This gives a function $f \in H^2$ satisfying $\varphi=\theta f$. Note that $f$ is invertible in $L^\infty$. Since $\theta f H^2=  \varphi H^2= \theta H^2$, it follows that $fH^2=H^2$, and then, $f$ is an outer function. Invertible functions in $H^\infty$ are characterized as outer functions which are invertible in $L^\infty$ (see e.g. \cite[Prop. 7.34]{douglas}). Then, $f$ is an invertible function in $H^\infty$, which clearly implies that the Toeplitz operator $T_f$ is invertible. Since $f\in H^\infty$, it follows that $T_\varphi=T_\theta T_f$. Hence  the kernel of $T_\varphi$ is trivial if and only if the kernel of $T_\theta$ is trivial.

As a direct consequence of the above results, we can now relate the injectivity problem for Toeplitz operators with the problem of finding a geodesic between two given subspaces $\varphi H^2$ and $\psi H^2$.

\begin{teo}\label{kernels geod}
Let $\varphi, \psi$ be invertible functions in $L^\infty$. The following are equivalent.
\begin{itemize}
\item[i)] $\ker(T_{\varphi \psi^{-1}})=\ker(T_{\varphi^{-1} \psi})=\{ 0 \}$.
\item[ii)] There is a geodesic  in $Gr$ joining  $P_\varphi$ and $P_\psi$.
\item[iii)] There is unique geodesic of  minimal length  in $Gr$  joining  $P_\varphi$ and $P_\psi$ given by
$$
\delta(t)=e^{itX}P_\varphi e^{-itX}, \, \, \, \, t \in[0,1],
$$
where $X=X_{\varphi, \psi}$ is a uniquely determined selfadjoint operator such that $\|X\|\leq \pi / 2$,  $e^{iX}P_\varphi e^{-iX}=P_\psi$, and
it is co-diagonal with respect to both $P_\varphi$ and $P_\psi$.
\end{itemize}
\end{teo}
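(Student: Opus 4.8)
The plan is to establish the chain $(i)\Rightarrow(iii)\Rightarrow(ii)\Rightarrow(i)$, the step $(iii)\Rightarrow(ii)$ being immediate. All the geometry is supplied by Remark \ref{exp geod} applied to the Hilbert space $H=L^2$ (note $P_\varphi,P_\psi\in Gr$ by Lemma \ref{fih2 closed}): a geodesic joining $P_\varphi$ and $P_\psi$ exists if and only if
$$
\dim\big(\ran P_\varphi\cap\ker P_\psi\big)=\dim\big(\ran P_\psi\cap\ker P_\varphi\big),
$$
and when both dimensions vanish this geodesic is unique, of minimal length, and of the exponential form $e^{itX}P_\varphi e^{-itX}$ with $X=X^*$, $\|X\|\le\pi/2$, $e^{iX}P_\varphi e^{-iX}=P_\psi$, and (as in Remark \ref{geometria de subespacios}) $X$ normalized to be co-diagonal with respect to $P_\varphi$. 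So the whole problem reduces to identifying the two intersection dimensions with dimensions of Toeplitz kernels. As a first step I would use the reduction displayed just before the statement: write $\varphi=\theta_1 f_1$ and $\psi=\theta_2 f_2$ with $\theta_1,\theta_2$ unimodular in $L^\infty$ and $f_1,f_2$ invertible in $H^\infty$, so that $P_\varphi=P_{\theta_1}$ and $P_\psi=P_{\theta_2}$.

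The core computation is $\dim(\ran P_\varphi\cap\ker P_\psi)=\dim\ker T_{\varphi\psi^{-1}}$. Apply the unitary $M_{\overline{\theta_2}}$ on $L^2$: it maps $\ran P_\varphi=\theta_1 H^2$ onto $\omega H^2$ and $\ker P_\psi=(\theta_2 H^2)^\perp$ onto $(H^2)^\perp=H^2_-$, where $\omega:=\overline{\theta_2}\theta_1$ is unimodular, so $\dim(\ran P_\varphi\cap\ker P_\psi)=\dim(\omega H^2\cap H^2_-)$. Since $h\mapsto\omega h$ is isometric on $H^2$ and $\omega h\in H^2_-$ precisely when $P_+(\omega h)=T_\omega h=0$, this map restricts to a bijection between $\ker T_\omega$ and $\omega H^2\cap H^2_-$; hence $\dim(\omega H^2\cap H^2_-)=\dim\ker T_\omega$. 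Finally $\varphi\psi^{-1}=\omega(f_1f_2^{-1})$ with $f_1f_2^{-1}$ invertible in $H^\infty$, so $T_{\varphi\psi^{-1}}=T_\omega T_{f_1f_2^{-1}}$ with the last factor invertible, giving $\dim\ker T_{\varphi\psi^{-1}}=\dim\ker T_\omega$. Interchanging $\varphi$ and $\psi$ yields $\dim(\ran P_\psi\cap\ker P_\varphi)=\dim\ker T_{\psi\varphi^{-1}}=\dim\ker T_{\varphi^{-1}\psi}$.

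Thus $(ii)$ amounts to $\dim\ker T_{\varphi\psi^{-1}}=\dim\ker T_{\varphi^{-1}\psi}$, and it remains to see that this holds exactly when both kernels are trivial; the implication ``both trivial $\Rightarrow$ equal dimensions'' is obvious. For the converse I would also record that $\dim\ker T_{\varphi^{-1}\psi}=\dim\ker T_{\varphi\psi^{-1}}^*$: from $\varphi\psi^{-1}=\omega(f_1f_2^{-1})$ one gets $T_{\varphi\psi^{-1}}^*=T_{f_1f_2^{-1}}^*T_\omega^*$ with $T_{f_1f_2^{-1}}^*$ invertible, while $\varphi^{-1}\psi=\overline{\omega}(f_1^{-1}f_2)$ gives $T_{\varphi^{-1}\psi}=T_{\overline{\omega}}T_{f_1^{-1}f_2}=T_\omega^*T_{f_1^{-1}f_2}$ with $T_{f_1^{-1}f_2}$ invertible, so both kernels have the dimension of $\ker T_\omega^*$. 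Now Coburn's lemma applied to the nonzero symbol $\varphi\psi^{-1}$ gives $\ker T_{\varphi\psi^{-1}}=\{0\}$ or $\ker T_{\varphi\psi^{-1}}^*=\{0\}$; combined with the threefold coincidence $\dim\ker T_{\varphi\psi^{-1}}=\dim\ker T_{\varphi\psi^{-1}}^*=\dim\ker T_{\varphi^{-1}\psi}$, the equality in $(ii)$ forces $\ker T_{\varphi\psi^{-1}}=\ker T_{\varphi^{-1}\psi}=\{0\}$, i.e. $(i)$. This proves $(ii)\Leftrightarrow(i)$. For $(i)\Rightarrow(iii)$, under $(i)$ both intersection dimensions are zero, so Remark \ref{exp geod} delivers the unique minimal geodesic $\delta(t)=e^{itX}P_\varphi e^{-itX}$ with the stated properties; it only remains to check that the exponent $X$, being co-diagonal with respect to $P_\varphi$, is also co-diagonal with respect to $P_\psi$. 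This follows because co-diagonality of $X$ with respect to a projection $P$ is equivalent to $X\epsilon_P=-\epsilon_P X$, and conjugating $X\epsilon_{P_\varphi}=-\epsilon_{P_\varphi}X$ by $e^{iX}$ (which commutes with $X$ and sends $\epsilon_{P_\varphi}$ to $\epsilon_{P_\psi}$) gives $X\epsilon_{P_\psi}=-\epsilon_{P_\psi}X$.

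The genuinely geometric ingredients (the exponential geodesic and its uniqueness/minimality, the reduction to unimodular symbols, Coburn's lemma) are all quoted from the previous material, so no real difficulty lies there. The one place that needs care is the middle step: attaching the correct symbol --- $T_{\varphi\psi^{-1}}$ rather than $T_{\varphi^{-1}\psi}$ or an adjoint --- to each of the two intersection subspaces, and then realizing that Remark \ref{exp geod} only yields an equality of two dimensions which must be promoted to ``both zero''; it is precisely Coburn's lemma, together with the identification $\dim\ker T_{\varphi\psi^{-1}}^*=\dim\ker T_{\varphi^{-1}\psi}$, that makes this promotion legitimate.
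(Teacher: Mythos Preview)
Your proof is correct and follows essentially the same route as the paper: identify the two intersection subspaces $\ran P_\varphi\cap\ker P_\psi$ and $\ran P_\psi\cap\ker P_\varphi$ with the kernels of $T_{\varphi\psi^{-1}}$ and $T_{\varphi^{-1}\psi}$ via multiplication by a unimodular function, then invoke Remark~\ref{exp geod} together with Coburn's lemma. The paper performs the reduction to unimodular symbols up front (replacing $\varphi,\psi$ by $\theta_1,\theta_2$ and writing $\varphi\psi^{-1}$ as $\varphi\ov{\psi}$), and uses $M_\psi$ rather than $M_{\ov{\theta_2}}$ to exhibit the isomorphism $\ker(T_{\ov{\varphi}\psi})\simeq(\varphi H^2)^\perp\cap\psi H^2$; your version carries the outer factors $f_1,f_2$ along explicitly and checks a couple of points (co-diagonality of $X$ with respect to $P_\psi$, and the equality $\dim\ker T_{\varphi^{-1}\psi}=\dim\ker T_{\varphi\psi^{-1}}^*$) that the paper leaves implicit, but the argument is the same.
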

\begin{proof}
We can assume without loss of generality that $\varphi, \psi$ are unimodular functions by the argument before the statement of this theorem. 
Then, note that the restriction of the multiplication operator
$$
M_\psi|_{\ker(T_{\overline{\varphi}\psi})}: \ker(T_{\overline{\varphi}\psi}) \to (\varphi H^2)^\perp \cap \psi H^2,
$$
is an isomorphism. Similarly,  $\ker(T_{\varphi \overline{\psi}})\simeq \varphi H^2 \cap (\psi H^2)^\perp$.
If the kernels of both $T_{\varphi\ov{\psi}}$ and $T_{\ov{\varphi}\psi}$ are trivial, then by Remark  \ref{exp geod} there is a geodesic joining $P_\varphi$ and $P_\psi$.  Conversely, if such a geodesic exists, then $\varphi H^2 \cap (\psi H^2)^\perp$ and $(\varphi H^2)^\perp \cap \psi H^2$ have the same dimension. By Coburn's lemma, this dimension must be zero.  Thus, we have shown that the first and second item are equivalent. The equivalence between the second and third item is  explained in Remark \ref{exp geod}.
\end{proof}

\begin{rem}
There are unimodular functions $\varphi$, $\psi$ such that $\ker(T_{\varphi\ov{\psi}})=\ker(T_{\ov{\varphi} \psi})=\{ 0\}$  and $T_{\varphi\ov{\psi}}$ is not invertible. We exhibit a special class of such functions in Example \ref{Toep iny rgo d no inv}.
\end{rem}

\subsection{On the operator $X_{\varphi, \psi}$}

Let us study in more detail the selfadjoint operator $X=X_{\varphi,\psi}$ linking the subspaces $\varphi H^2$ and $\psi H^2$ in Theorem \ref{kernels geod}. To this effect, we recall the following facts concerning Halmos'  model for two orthogonal projections  $P_0$ and $Q_0$ in generic position acting in a Hilbert space $H$.
Under this assumption, there exists an isometric isomorphism between $H$ and a product space $K\times K$ and a positive operator $Z$ in $K$ with $\|Z\|\le \pi/2$ and $\ker(Z)=\{0\}$. This isomorphism transforms the projections $Q_0$ and $P_0$ into
$$
Q_0=\left(\begin{array}{cc} 1 & 0 \\ 0 & 0 \end{array} \right) \ \hbox{ and }\ \ P_0=\left(\begin{array}{cc}C^2 & CS \\ CS & S^2 \end{array} \right),
$$
where $C=\cos(Z)$ and $S=\sin(Z)$ \cite{halmos}.
The  unique selfadjoint operator $X$ linking these projections is  (see \rm{\cite{jmaa}})
$$
X=\left(\begin{array}{cc} 0 & iZ \\ -iZ & 0 \end{array} \right).
$$
Note that $\|X\|=\|Z\|$.

Let $\sigma(A)$ denote the spectrum of an operator $A$. Recall the definition of reduced minimum modulus $\gamma(A)$ of an operator $A\neq 0$:
\begin{align*}
\gamma(A) & =\inf\{\, \|Af\| \, : \, \|f\|=1,  \, f\in \ker(A)^\perp \, \}\\
& =\inf \, \sigma(|A|)\setminus\{0\}.
\end{align*}

\begin{prop}\label{norm xfisi}
Let $\varphi, \psi$ be unimodular functions in $L^\infty$ such that 
$$
\ker(T_{\varphi \ov{\psi}})=\ker(T_{\ov{\varphi} \psi})=\{ 0 \}.
$$
Then
$$
Z=M_{\varphi}\cos^{-1}\left(|T_{\varphi\ov{\psi}}|\right)M_{\ov{\varphi}}
$$
and in particular
$$
\|X_{\varphi,\psi}\|=\cos^{-1}(\gamma(T_{\varphi\ov{\psi}})).
$$
\end{prop}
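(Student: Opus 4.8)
Here is how I would go about it.

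The plan is to read off both $Z$ and $\|X_{\varphi,\psi}\|$ from Halmos' two–projections model applied to the pair $(P_\varphi,P_\psi)$. Since $\ker(T_{\varphi\ov{\psi}})=\ker(T_{\ov{\varphi}\psi})=\{0\}$, Theorem~\ref{kernels geod} already provides the unique selfadjoint $X=X_{\varphi,\psi}$ with $\|X\|\le\pi/2$, co-diagonal with respect to $P_\varphi$ and satisfying $e^{iX}P_\varphi e^{-iX}=P_\psi$. By the isomorphisms used in the proof of Theorem~\ref{kernels geod}, the hypothesis is exactly the vanishing of $\ran(P_\varphi)\cap\ker(P_\psi)$ and of $\ran(P_\psi)\cap\ker(P_\varphi)$. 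Hence, in Halmos' decomposition of $L^2$, besides the generic part $H_0$ only the ``diagonal'' summands $\mathcal N:=\varphi H^2\cap\psi H^2$ (on which $P_\varphi=P_\psi=I$) and $\mathcal M:=(\varphi H^2+\psi H^2)^\perp$ (on which $P_\varphi=P_\psi=0$) survive. On $\mathcal N\oplus\mathcal M$ both $X$ and the angle operator $Z$ vanish, while on $H_0$ the recalled model applies with $X$ of the form $\left(\begin{smallmatrix}0 & iZ_0\\ -iZ_0 & 0\end{smallmatrix}\right)$; identifying $P_\varphi$ with $Q_0$ (so that $\ran(P_\varphi)=\varphi H^2$ is the factor $K$ and the geodesic runs from $Q_0$ to $P_0=P_\psi$), one gets $Z=0\oplus Z_0$ on $\varphi H^2=\mathcal N\oplus(\varphi H^2\cap H_0)$, together with $\|X\|=\|Z\|$ and the model identity $Q_0P_0Q_0|_{\ran Q_0}=\cos^2 Z$, that is,
$$
P_\varphi P_\psi P_\varphi\big|_{\varphi H^2}=\cos^2 Z .
$$

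The next step is to compute this restriction in terms of a Toeplitz operator. Writing $P_\varphi=M_\varphi P_+M_{\ov{\varphi}}$ and $P_\psi=M_\psi P_+M_{\ov{\psi}}$ (legitimate because $|\varphi|=|\psi|=1$ a.e.), conjugating by the unitary $M_\varphi\colon H^2\to\varphi H^2$, using the commutativity $\ov{\psi}\varphi=\varphi\ov{\psi}$ and the fact that $P_+M_aP_+M_bP_+$ restricts on $H^2$ to $T_aT_b$, one finds
$$
M_{\ov{\varphi}}\,(P_\varphi P_\psi P_\varphi)\,M_\varphi\big|_{H^2}=T_{\ov{\varphi}\psi}\,T_{\varphi\ov{\psi}}=T_{\varphi\ov{\psi}}^{\,*}\,T_{\varphi\ov{\psi}}=\big|T_{\varphi\ov{\psi}}\big|^{2},
$$
so that $P_\varphi P_\psi P_\varphi|_{\varphi H^2}=M_\varphi|T_{\varphi\ov{\psi}}|^{2}M_{\ov{\varphi}}$. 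Since $A\mapsto M_\varphi A M_{\ov{\varphi}}$ is a unital $*$-isomorphism of $\b(H^2)$ onto $\b(\varphi H^2)$, it commutes with continuous functional calculus; because $\sigma(|T_{\varphi\ov{\psi}}|)\subseteq[0,1]$ and $\cos$ maps $[0,\pi/2]$ bijectively onto $[0,1]$, applying $\cos^{-1}$ to the last display and recalling $Z\ge 0$, $\|Z\|\le\pi/2$ gives
$$
Z=\cos^{-1}\!\big(M_\varphi|T_{\varphi\ov{\psi}}|M_{\ov{\varphi}}\big)=M_\varphi\,\cos^{-1}\!\big(|T_{\varphi\ov{\psi}}|\big)\,M_{\ov{\varphi}},
$$
which is the first assertion.

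For the norm, $\|X_{\varphi,\psi}\|=\|Z\|=\big\|\cos^{-1}(|T_{\varphi\ov{\psi}}|)\big\|$ (the outer multiplications being unitary), and by the spectral mapping theorem together with the monotonicity of $\cos^{-1}$ on $[0,1]$ this equals $\cos^{-1}\!\big(\inf\sigma(|T_{\varphi\ov{\psi}}|)\big)$. It then remains to note that $\inf\sigma(|T_{\varphi\ov{\psi}}|)=\gamma(T_{\varphi\ov{\psi}})$: this is immediate if $0\notin\sigma(|T_{\varphi\ov{\psi}}|)$, and otherwise, since $T_{\varphi\ov{\psi}}$ is injective, $0$ is not an eigenvalue of the selfadjoint operator $|T_{\varphi\ov{\psi}}|$, hence not isolated in its spectrum, so $\inf\big(\sigma(|T_{\varphi\ov{\psi}}|)\setminus\{0\}\big)=0=\inf\sigma(|T_{\varphi\ov{\psi}}|)$. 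In either case $\|X_{\varphi,\psi}\|=\cos^{-1}(\gamma(T_{\varphi\ov{\psi}}))$.

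I expect the only delicate point to be the book-keeping with Halmos' model when $P_\varphi$ and $P_\psi$ are \emph{not} in generic position (which does happen, e.g. $\varphi=\psi$): one has to verify that the extra reducing summands $\varphi H^2\cap\psi H^2$ and $(\varphi H^2+\psi H^2)^\perp$ neither disturb the identity $P_\varphi P_\psi P_\varphi|_{\varphi H^2}=\cos^2 Z$ nor the equality $\|X\|=\|Z\|$, which they do not since $X$ and $Z$ are $0$ on them. The remaining ingredients — the Toeplitz computation $M_{\ov{\varphi}}(P_\varphi P_\psi P_\varphi)M_\varphi|_{H^2}=|T_{\varphi\ov{\psi}}|^{2}$ and the identification $\inf\sigma(|T_{\varphi\ov{\psi}}|)=\gamma(T_{\varphi\ov{\psi}})$ — are routine.
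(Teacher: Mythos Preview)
Your proof is correct and follows essentially the same route as the paper: reduce to Halmos' model on the generic part (where $X$ acts nontrivially), compute $P_\varphi P_\psi P_\varphi$ as $M_\varphi|T_{\varphi\ov{\psi}}|^2M_{\ov{\varphi}}$, invert the cosine via functional calculus, and then identify $\inf\sigma(|T_{\varphi\ov{\psi}}|)$ with $\gamma(T_{\varphi\ov{\psi}})$ using that injectivity prevents $0$ from being isolated in the spectrum. Your treatment is in fact a bit more careful than the paper's in spelling out why the formula $Z=M_\varphi\cos^{-1}(|T_{\varphi\ov{\psi}}|)M_{\ov{\varphi}}$ remains valid on the non-generic summand $\varphi H^2\cap\psi H^2$ (where both sides vanish), but otherwise the arguments coincide.
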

\begin{proof}
On the non generic part of $P_\varphi$ and $P_\psi$, the operator $X=X_{\varphi,\psi}$ is trivial. Thus in order to compute its norm we restrict to the generic part, and thus they can be described by Halmos' model,
$$
X=\left( \begin{array}{cc} 0 & iZ \\ -iZ & 0 \end{array} \right).
$$
It is elementary that, if $Q_0$, $P_0$ denote the reductions of $P_\varphi$, $P_\psi$ to the generic parts, then
$$
Q_0P_0Q_0=\left( \begin{array}{cc} C^2 & 0 \\ 0 & 0 \end{array} \right).
$$
Now
$$
C^2=P_\varphi P_\psi P_\varphi=M_\varphi P_+ M_{\ov{\varphi}}M_\psi P_+M_{\ov{\psi}}M_\varphi P_+ M_{\ov{\varphi}}=M_\varphi T_{\varphi\ov{\psi}}^* T_{\varphi\ov{\psi}} M_{\ov{\varphi}}=M_\varphi  |T_{\varphi\ov{\psi}}|^2 M_{\ov{\varphi}}.
$$
Therefore $0\le C=\cos(Z)= M_\varphi  |T_{\varphi\ov{\psi}}| M_{\ov{\varphi}}$, and thus, $Z=M_{\varphi}\cos^{-1}\left(|T_{\varphi\ov{\psi}}|\right)M_{\ov{\varphi}}$. From this formula, it follows that
$$\|X_{\varphi,\psi}\|=\| \cos^{-1}(|T_{\varphi\ov{\psi}}|)\|=\cos^{-1}(\lambda_0),$$
 where
$$
\lambda_0 = \inf \, \sigma(|T_{\varphi\ov{\psi}}|)
= \inf \, \sigma(|T_{\varphi\ov{\psi}}|)\setminus\{0\}
= \gamma(T_{\varphi\ov{\psi}}).
$$
The second equality can be deduced from the assumption that $T_{\varphi\ov{\psi}}$ is injective, which implies that $0$ cannot be an isolated point of $\sigma(|T_{\varphi\ov{\psi}}|)$.
\end{proof}

\begin{ejem}\label{ejemplito}
Consider $\varphi=\chi_1$ and the Blaschke factor
$$
\psi(e^{it})=b_a(e^{it})=\frac{\overline{a}}{|a|} \frac{a-e^{it}}{1 - \overline{a}e^{it}} \, ,
$$
for $0<|a|<1$. Then by direct computation,
$$
\varphi H^2 \cap (\psi H^2)^\perp=(\varphi H^2)^\perp \cap \psi H^2=\{0\} \ , \ \  (\varphi H^2)^\perp \cap (\psi H^2)^\perp=H^2_-
$$
and
$$
(\varphi H^2) \cap \psi H^2= \chi_1 b_a H^2= \chi_1 (\chi_1-a)H^2.
$$
Then the generic part $H_0$ of $\varphi H^2$ and $\psi H^2$ is the two dimensional space $H^2\ominus \chi_1 (\chi_1-a) H^2$.
The reduced projections $Q_0=P_\varphi|_{H_0}$ and $P_0=P_\psi|_{H_0}$  are one dimensional,
$$
\ran(Q_0)=H_0\cap \chi_1 H^2=\left\langle \frac{\chi_1}{1-\ov{a}\chi_1} \right\rangle \ , \ \  \ran(P_0)=H_0 \cap (\chi_1-a)H^2=\left\langle \frac{\chi_1 -a}{1-\ov{a}\chi_1}\right\rangle .
$$
According Halmos' formulas,
$$
Q_0P_0Q_0=\left(\begin{array}{cc} C^2 & 0 \\ 0 & 0 \end{array}\right).
$$
Denote by $f$ and $g$ the normalizations of $\frac{\chi_1}{1-\ov{a}\chi_1} $ and $\frac{\chi_1 -a}{1-\ov{a}\chi_1}$, respectively. As usual, let $f_1 \otimes f_2$ be the rank one operator defined by $f_1 \otimes f_2(h)=<h,f_2>f_1$. Then, we have another expression
$$
Q_0P_0Q_0=(f\otimes f)(g\otimes g)(f\otimes f)=|<f,g>|^2f\otimes f.
$$
 Therefore,
$$
\left(\begin{array}{cc} C & 0 \\ 0 & 0 \end{array}\right)=|<f,g>|f\otimes f.
$$
In this case $C=cos(Z)$ is a positive real number, and thus $Z=\cos^{-1}(|<f,g>|)$. Simple computations show that
$|<f,g>|=(1-|a|^2)^{1/2}$, which gives
$$
Z=\cos^{-1}((1-|a|^2)^{1/2})=\sin^{-1}(|a|).
$$
Then, the part of $X_{\varphi,\psi}$ acting on $H_0$ is
$$
X_{\varphi,\psi}|_{H_0}=\left(\begin{array}{cc} 0 & -i\sin^{-1}(|a|) \\ i\sin^{-1}(|a|) & 0 \end{array}\right).
$$
The restriction of $X_{\varphi,\psi}$ to $H_0^\perp$ is trivial. Thus, $X_{\varphi,\psi}$ has rank two, and
$$
\|X_{\varphi,\psi}\|=\sin^{-1}(|a|).
$$
\end{ejem}


\subsection{Norm inequalities}

The minimality property of the geodesics in the Grassmann manifold may be used to obtain   operator inequalities.

\begin{teo}\label{teo des toeplitz}
Let $\varphi, \psi$ be unimodular functions in $L^\infty$ such that $\ker(T_{\varphi \ov{\psi}})=\ker(T_{\ov{\varphi} \psi})=\{ 0 \}$.
Then
$$
\|M_\theta P_+-P_+M_\theta\|\ge \cos^{-1}(\gamma(T_{\varphi\ov{\psi}})),
$$
for every real argument $\theta \in L^\infty$ of the function $\varphi \ov{\psi}$.
\end{teo}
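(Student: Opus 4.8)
The plan is to exhibit one explicit, smooth curve in $Gr$ joining $P_\varphi$ and $P_\psi$ whose length is exactly $\|M_\theta P_+-P_+M_\theta\|$, and then to invoke the minimality of the geodesic furnished by Theorem \ref{kernels geod}. Since $\theta\in L^\infty$ is real-valued, $M_\theta$ is a bounded selfadjoint operator on $L^2$ and $M_{e^{it\theta}}=e^{itM_\theta}$ is a one-parameter group of unitaries (multiplication by the unimodular function $e^{it\theta}$). I would set
$$
\beta(t)=e^{itM_\theta}\,P_\psi\,e^{-itM_\theta}=M_{e^{it\theta}}\,P_\psi\,M_{e^{-it\theta}},\qquad t\in[0,1],
$$
a (norm-analytic, hence piecewise $C^1$) curve of orthogonal projections in $Gr$. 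Clearly $\beta(0)=P_\psi$, while the hypothesis $\varphi\ov{\psi}=e^{i\theta}$ gives $e^{i\theta}\psi=\varphi$, so that
$$
\beta(1)=M_{e^{i\theta}}M_\psi P_+M_{\ov{\psi}}M_{e^{-i\theta}}=M_{e^{i\theta}\psi}\,P_+\,M_{\overline{e^{i\theta}\psi}}=M_\varphi P_+M_{\ov{\varphi}}=P_\varphi .
$$

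Next I would compute $L(\beta)$. Differentiating yields $\dot\beta(t)=i[M_\theta,\beta(t)]$; since $M_\theta$ commutes with the multiplication operators $M_{e^{\pm it\theta}}$ one obtains $\dot\beta(t)=i\,M_{e^{it\theta}}\,[M_\theta,P_\psi]\,M_{e^{-it\theta}}$, and as $M_{e^{it\theta}}$ is unitary the speed $\|\dot\beta(t)\|=\|[M_\theta,P_\psi]\|$ is constant in $t$. Moreover $P_\psi=M_\psi P_+M_{\ov{\psi}}$ and $M_\theta$ commutes with $M_\psi$ and $M_{\ov{\psi}}$, whence $[M_\theta,P_\psi]=M_\psi[M_\theta,P_+]M_{\ov{\psi}}$ and $\|[M_\theta,P_\psi]\|=\|[M_\theta,P_+]\|=\|M_\theta P_+-P_+M_\theta\|$. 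Therefore $L(\beta)=\|M_\theta P_+-P_+M_\theta\|$.

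Finally, the hypothesis $\ker(T_{\varphi\ov{\psi}})=\ker(T_{\ov{\varphi}\psi})=\{0\}$ puts us in the situation of Theorem \ref{kernels geod} (iii): there is a minimizing geodesic $\delta(t)=e^{itX_{\varphi,\psi}}P_\varphi e^{-itX_{\varphi,\psi}}$, $t\in[0,1]$, joining $P_\varphi$ and $P_\psi$. Since $X_{\varphi,\psi}$ is selfadjoint and co-diagonal with respect to $P_\varphi$, it commutes with $e^{itX_{\varphi,\psi}}$, and a $2\times 2$ block computation with respect to $P_\varphi$ gives $\|[X_{\varphi,\psi},P_\varphi]\|=\|X_{\varphi,\psi}\|$; hence $\delta$ has constant speed and $L(\delta)=\|X_{\varphi,\psi}\|$, which by Proposition \ref{norm xfisi} equals $\cos^{-1}(\gamma(T_{\varphi\ov{\psi}}))$. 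As $\delta$ has minimal length among all piecewise $C^1$ curves in $Gr$ joining $P_\varphi$ and $P_\psi$, and $\beta$ is such a curve, we conclude $\|M_\theta P_+-P_+M_\theta\|=L(\beta)\ge L(\delta)=\cos^{-1}(\gamma(T_{\varphi\ov{\psi}}))$, as desired.

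The individual computations are elementary. The one step that deserves care is the identity $L(\delta)=\|X_{\varphi,\psi}\|$ — that a conjugation curve $e^{itX}Pe^{-itX}$ with $X$ selfadjoint and co-diagonal with respect to $P$ has constant speed equal to $\|X\|$ — together with the invocation of the minimality of $\delta$ among all competing curves. The only genuinely new idea is that conjugating $P_\psi$ by the unitary group generated by a bounded selfadjoint argument of $\varphi\ov{\psi}$ produces a curve from $P_\psi$ to $P_\varphi$ whose speed is precisely the commutator norm to be bounded below; the rest is bookkeeping.
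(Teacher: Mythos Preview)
Your proof is correct and follows essentially the same approach as the paper: build the conjugation curve $t\mapsto M_{e^{it\theta}}P_\bullet M_{e^{-it\theta}}$ joining the two projections, observe that its speed is constantly $\|[M_\theta,P_+]\|$, and compare with the minimal geodesic whose length is $\|X_{\varphi,\psi}\|=\cos^{-1}(\gamma(T_{\varphi\ov{\psi}}))$ via Proposition \ref{norm xfisi}. The only cosmetic difference is that the paper starts the curve at $P_\varphi$ rather than $P_\psi$, and it takes the identity $L(\delta)=\|X_{\varphi,\psi}\|$ for granted from the earlier discussion of geodesics, whereas you spell out the co-diagonal block computation explicitly.
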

\begin{proof}
Let $\theta$ be a real function in $L^\infty$ such that $e^{i\theta}=\varphi \ov{\psi}$. Consider the curve
$$
\alpha(t)=M_{e^{it\theta}}P_\varphi M_{e^{-it\theta}}.
$$
Apparently, $\alpha(t)$ is a smooth curve in $Gr$ with $\alpha(0)=P_\varphi$ and
$
\alpha(1)=M_{\ov{\varphi}\psi}P_\varphi M_{\varphi\ov{\psi}}=P_\psi .
$
Then $\alpha(t)$ is longer than the (unique) minimal geodesic which joins $\varphi H^2$ and $\psi H^2$, whose length is $\|X_{\varphi,\psi}\|$.
Note that
$$
\dot{\alpha}(t)=i M_{e^{it\theta}}M_\theta P_\varphi-i P_\varphi M_{\theta}M_{e^{-it\theta}}=i M_{e^{it\theta}}M_{\varphi}(M_{\theta} P_+-P_+ M_\theta)M_{\ov{\varphi}}M_{e^{-it\theta}}\,.
$$
Thus, we find that  $\|\dot{\alpha}(t)\|= \|M_{\theta} P_+-P_+ M_\theta\|$, and using Proposition \ref{norm xfisi}, we obtain
$$
\cos^{-1}(\gamma(T_{\varphi\ov{\psi}}))=\|X_{\varphi,\psi}\|\le L(\alpha)=\int_0^1\|\dot{\alpha}(t)\| dt=\|M_{\theta} P_+-P_+ M_\theta\|.
\qedhere$$
\end{proof}

\begin{rem}
With the same hypothesis and notations as in the above theorem, note that the operator $M_\theta$ is selfadjoint. Therefore the commutator $[M_\theta, P_+]=M_\theta P_+-P_+M_\theta$ is anti-hermitian. Also elementary computations show that
$$
P_+ [M_\theta, P_+]P_+=P_-[M_\theta, P_+]P_-=0,
$$
i.e. $[M_\theta, P_+]$  is co-diagonal with respect to $P_+$. Thus, its norm can be related to the norm of the Hankel operator $H_\theta$ by
$$
\|[M_\theta, P_+]\|=\|P_-M_\theta P_+\|=\|H_\theta\|.
$$
Then, by Nehari's theorem (see for instance \cite{niko86}),
$$
\|[M_\theta, P_+]\|=\inf\{\|\theta-f\|_\infty: f\in H^\infty\}.
$$
Hence,
$$
\|X_{\varphi,\psi}\| \leq  \inf\{\|\theta -f\|_\infty: f\in H^\infty\}.
$$
\end{rem}

Special cases of the above inequality can be rephrased without any mention to complex unimodular functions.
\begin{coro}
Let $\theta$ be a real valued continuous function, then
$$
\|M_\theta P_+ - P_+ M_\theta \|\ge \cos^{-1}(\gamma(T_{e^{i\theta}})).
$$
\end{coro}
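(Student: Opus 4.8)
The plan is to obtain this corollary as the special case $\psi\equiv 1$, $\varphi=e^{i\theta}$ of Theorem \ref{teo des toeplitz}. For that choice $\varphi$ and $\psi$ are unimodular functions in $L^\infty$, one has $\varphi\ov{\psi}=e^{i\theta}$, and $\theta$ is by construction a real argument of $\varphi\ov{\psi}$; so the conclusion of Theorem \ref{teo des toeplitz} becomes exactly $\|M_\theta P_+-P_+M_\theta\|\ge\cos^{-1}(\gamma(T_{e^{i\theta}}))$. Thus the entire content of the corollary is the verification that the hypothesis of that theorem is satisfied, namely that $\ker(T_{e^{i\theta}})=\ker(T_{e^{-i\theta}})=\{0\}$.

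To check this I would first note that, since $\theta$ is a (single-valued) continuous real function on $\T$, the unimodular function $e^{i\theta}$ admits the continuous logarithm $i\theta$; equivalently, $s\mapsto e^{is\theta}$, $s\in[0,1]$, is a homotopy through nowhere-vanishing continuous functions joining the constant $1$ to $e^{i\theta}$. As the winding number is a homotopy invariant, $ind(e^{i\theta})=0$. Being continuous and unimodular, $e^{i\theta}$ is invertible in $C$ (its inverse is $e^{-i\theta}\in C$), hence invertible in $\hc$; by Theorem \ref{same prop toep}(ii) the operator $T_{e^{i\theta}}$ is then Fredholm with $ind(T_{e^{i\theta}})=-ind(e^{i\theta})=0$, and by Theorem \ref{same prop toep}(i) it is invertible. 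In particular $\ker(T_{e^{i\theta}})=\ker(T_{e^{i\theta}}^*)=\ker(T_{e^{-i\theta}})=\{0\}$, as required, and Theorem \ref{teo des toeplitz} applies verbatim to yield the stated inequality.

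The only delicate point is this verification that both Toeplitz kernels vanish — equivalently, that $T_{e^{i\theta}}$ is invertible — and it is precisely here that the continuity of $\theta$ is used, through the vanishing of the winding number of $e^{i\theta}$. If one prefers to bypass the homotopy argument, one can instead combine Coburn's lemma with the index-zero Fredholm property of $T_{e^{i\theta}}$: were one of the two kernels nontrivial, Fredholm index zero would force the other to be nontrivial as well, contradicting Coburn's lemma.
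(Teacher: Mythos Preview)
Your proof is correct and follows essentially the same approach as the paper: specialize Theorem~\ref{teo des toeplitz} to $\varphi=e^{i\theta}$, $\psi=1$, and verify the kernel hypothesis by observing that $e^{i\theta}$ is a continuous invertible function of index zero, so $T_{e^{i\theta}}$ is Fredholm of index zero and hence invertible. Your additional remarks (the explicit homotopy and the alternative via Coburn's lemma) are correct elaborations but not departures from the paper's argument.
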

\begin{proof}
Put $\varphi=e^{i\theta}$ and $\psi=1$ in Theorem \ref{teo des toeplitz}. Then, note that $\varphi$ is an invertible continuous function with zero index. Hence the operator $T_{\varphi}$ is Fredholm and has index zero, which implies that it is invertible.
\end{proof}

Let $\theta_t$, $t \in [0,1]$,  be a piecewise differentiable path of real valued functions in $C$. Then the curve  $\alpha(t)=M_{e^{i\theta_t}}P_+M_{e^{-i\theta_t}}$ is piecewise differentiable. Similarly as above, its velocity is
$$
\|\dot{\alpha}(t)\|=\|M_{e^{i\theta_t}}[M_{i\dot{\theta}_t},P_+]M_{-e^{i\theta_t}}\|=\|H_{\dot{\theta}_t}\|=\inf\{\|\dot{\theta}_t-f\|_\infty: f\in H^\infty\}.
$$
The last quantity can be regarded as the norm of $[\dot{\theta}_t]$, the class of $\dot{\theta}_t$  in the quotient
$L^\infty/H^\infty$ (which is also the velocity of the curve $[\theta_t]$ in the quotient). Therefore,
$$
L(\alpha)=L_{L^\infty/H^\infty}([\theta_t]).
$$
Note that the curve $\theta_t$  is arbitrary between $\theta_0$ and $\theta_1$. In particular, when $\theta_t$ is a straight line, we have the following:
\begin{coro}
Let $\theta_0, \theta_1$ be  real valued continuous functions, then
$$
\|\theta_0-\theta_1\|_{L^\infty/H^\infty}
\ge \|X_{e^{i\theta_0},e^{i\theta_1}}\|= \cos^{-1}(\gamma(T_{e^{i(\theta_1-\theta_o)}})).
$$
\end{coro}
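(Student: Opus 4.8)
The plan is to read the second equality off Proposition \ref{norm xfisi} and the first inequality off the length computation for the straight-line path carried out just above the statement, once one has checked that the relevant Toeplitz operators are injective.

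First I would verify the hypotheses of Proposition \ref{norm xfisi} with $\varphi=e^{i\theta_0}$ and $\psi=e^{i\theta_1}$, so that $\varphi\ov{\psi}=e^{i(\theta_0-\theta_1)}$. Since $\theta_0-\theta_1$ is a real-valued continuous function on $\T$, the function $i(\theta_0-\theta_1)$ is a continuous logarithm of $e^{i(\theta_0-\theta_1)}$ on $\T$; hence $e^{i(\theta_0-\theta_1)}$ is a continuous unimodular function with $ind(e^{i(\theta_0-\theta_1)})=0$. Being continuous and nowhere zero it is invertible in $C$, hence in $\hc$, so by Theorem \ref{same prop toep} the operator $T_{e^{i(\theta_0-\theta_1)}}$ is Fredholm with index $-ind(e^{i(\theta_0-\theta_1)})=0$, and therefore invertible. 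In particular $\ker(T_{\varphi\ov{\psi}})=\ker(T_{\ov{\varphi}\psi})=\{0\}$ (the second kernel being that of the adjoint), so Proposition \ref{norm xfisi} applies and gives $\|X_{e^{i\theta_0},e^{i\theta_1}}\|=\cos^{-1}(\gamma(T_{e^{i(\theta_0-\theta_1)}}))$. Since $T_{e^{i(\theta_1-\theta_0)}}=T_{e^{i(\theta_0-\theta_1)}}^*$ and $\gamma$ is invariant under taking adjoints (the nonzero parts of $\sigma(|A|)$ and $\sigma(|A^*|)$ coincide), this is the same as $\cos^{-1}(\gamma(T_{e^{i(\theta_1-\theta_0)}}))$, which is the second equality in the statement.

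For the inequality I would take the straight-line path $\theta_t=(1-t)\theta_0+t\theta_1$, $t\in[0,1]$, a differentiable path of real continuous functions with constant derivative $\dot{\theta}_t\equiv\theta_1-\theta_0$. By the discussion preceding the corollary, the curve $\alpha(t)=M_{e^{i\theta_t}}P_+M_{e^{-i\theta_t}}$ lies in $Gr$, joins $P_{e^{i\theta_0}}$ to $P_{e^{i\theta_1}}$, and has constant speed $\|\dot{\alpha}(t)\|=\|H_{\theta_1-\theta_0}\|=\inf\{\|\theta_1-\theta_0-f\|_\infty:f\in H^\infty\}=\|\theta_0-\theta_1\|_{L^\infty/H^\infty}$, whence $L(\alpha)=\|\theta_0-\theta_1\|_{L^\infty/H^\infty}$. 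On the other hand, by Theorem \ref{kernels geod} (equivalently Remark \ref{exp geod}) the geodesic $\delta(t)=e^{itX}P_{e^{i\theta_0}}e^{-itX}$ with $X=X_{e^{i\theta_0},e^{i\theta_1}}$ is a curve of minimal length joining these two projections, of length $\|X_{e^{i\theta_0},e^{i\theta_1}}\|$. Comparing the two lengths yields $\|\theta_0-\theta_1\|_{L^\infty/H^\infty}=L(\alpha)\ge L(\delta)=\|X_{e^{i\theta_0},e^{i\theta_1}}\|$. (Alternatively, this inequality is Theorem \ref{teo des toeplitz} applied to $\varphi,\psi$ with real argument $\theta=\theta_0-\theta_1$ of $\varphi\ov{\psi}$, together with the identity $\|[M_\theta,P_+]\|=\|\theta\|_{L^\infty/H^\infty}$ from the remark following that theorem.) Since every ingredient is already available, there is no serious obstacle; the one point that needs a moment's care is the observation that $ind(e^{i(\theta_0-\theta_1)})=0$ precisely because $\theta_0-\theta_1$ is a genuine real continuous function on $\T$, which is exactly what makes $T_{e^{i(\theta_0-\theta_1)}}$ injective and thereby allows Proposition \ref{norm xfisi} to be invoked.
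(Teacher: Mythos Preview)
Your proposal is correct and follows exactly the route the paper intends: the corollary is stated immediately after the computation of $L(\alpha)=L_{L^\infty/H^\infty}([\theta_t])$ with the remark ``when $\theta_t$ is a straight line,'' and you have filled in precisely the omitted details, including the index-zero argument ensuring invertibility of $T_{e^{i(\theta_0-\theta_1)}}$ (as in the preceding corollary) and the observation that $\gamma$ is adjoint-invariant, which reconciles the exponent $\theta_1-\theta_0$ in the statement with the $\varphi\ov{\psi}=e^{i(\theta_0-\theta_1)}$ appearing in Proposition~\ref{norm xfisi}.
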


\section{The action of $\hc$ on $Gr_{res}$}\label{sarason y grass sato}

The space $L^2$ has the orthogonal decomposition $L^2=H^2\oplus H^2_-$, which we now use  to give the following definition.
The \textit{compact restricted Grassmannian} $Gr_{res}$  is the manifold of closed linear subspaces $W\subset L^2$ such that
\begin{itemize}
\item $P_+|_W:W\to H^2\in\b(W,H^2)$
is a Fredholm operator, and
\item $P_-|_W:W\to H_- ^2\in\b(W,H_-^2)
$ is a compact operator.
\end{itemize}
The components of the restricted Grassmannian are parametrized by $k\in\mathbb Z$, where $k$ is the index of the operator $P_+|_W:W\to H^2\in\b(W,H^2)$,
$$
Gr_{res}^k=\{W\in Gr_{res}: ind(P_+|_W:W\to H^2)=k\}.
$$
In particular, since $P_+$ is the identity restricted to $H^2$, $H^2=\ran(P_+)\in Gr_{res}^0$.

\begin{lem}\label{cond fi grass res}
Let $\varphi$ be an invertible function in $L^\infty$. Then the following are equivalent.
\begin{enumerate}
\item[i)] $\varphi H^2 \in Gr_{res}$.
\item[ii)] $\varphi$ is an invertible function in $\hc$.
\item[iii)] $\varphi H^2=\theta H^2$ for some $\theta\in QC$, $|\theta|=1$ a.e.
\end{enumerate}
In this case, $\varphi H^2 \in Gr_{res}^k$, where $k=-ind(\varphi)=-ind(\theta)$.
\end{lem}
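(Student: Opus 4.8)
The plan is to prove the chain of implications $(i) \Leftrightarrow (ii)$ first, then $(ii) \Leftrightarrow (iii)$, and finally compute the index. For $(ii) \Rightarrow (i)$, suppose $\varphi$ is invertible in $\hc$. I would compute the operator $P_+|_{\varphi H^2}$ by transferring it to $H^2$: the map $M_{\ov\varphi}$ is an isomorphism from $\varphi H^2$ onto... well, not quite onto $H^2$ since $\varphi$ need not be unimodular, so instead I would use the reduction argument from Section 3: write $\varphi H^2 = \theta H^2$ with $|\theta|=1$ a.e., where $\theta = \varphi f^{-1}$ for an outer invertible $f\in H^\infty$. Then $P_+|_{\varphi H^2}$ is, up to the isomorphism $M_\theta : H^2 \to \theta H^2$, the operator $P_+ M_\theta = T_\theta$ on $H^2$, and $P_-|_{\varphi H^2}$ corresponds to $P_- M_\theta = H_\theta$, the Hankel operator. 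So the two defining conditions for $Gr_{res}$ become: $T_\theta$ is Fredholm and $H_\theta$ is compact. The first step, then, is to check that $\theta$ is itself invertible in $\hc$ (since $f$ is invertible in $H^\infty\subset\hc$ and $\hc$ is an algebra with $\varphi$ invertible in it). By Theorem \ref{same prop toep}(ii), invertibility of $\theta$ in $\hc$ is equivalent to $T_\theta$ being Fredholm, giving the first condition. For the second, compactness of $H_\theta$ is equivalent to $\theta \in \hc$ by Hartman's theorem (a classical fact I would cite from the references, e.g.\ \cite{niko86}); since $\theta\in\hc$ automatically, $H_\theta$ is compact. This gives $\varphi H^2\in Gr_{res}$.

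For $(i) \Rightarrow (ii)$: if $\varphi H^2\in Gr_{res}$, again write $\varphi H^2=\theta H^2$ with $|\theta|=1$ a.e.\ (this is legitimate because $\varphi$ is invertible in $L^\infty$, so $\varphi H^2$ is a closed shift-invariant subspace of $L^2$, necessarily singly invariant as it is a proper subspace, so Beurling--Helson applies). Then $H_\theta = P_-M_\theta$ is compact, so by Hartman's theorem $\theta\in\hc$; and $\ov\theta = \theta^{-1}$ also satisfies $\theta H^2 = \varphi H^2$ on the other side — more carefully, $\overline{\theta H^2} = \ov\theta\, \ov{H^2}$, and I would argue that $(\theta H^2)^\perp = \ov\theta H^2_- $ forces $H_{\ov\theta}$ compact as well, hence $\ov\theta\in\hc$, so $\theta\in QC$. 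In particular $\theta$ is a unimodular element of $QC$, hence invertible in $\hc$ (stated in the background), and then $\varphi = \theta f$ with $f$ invertible in $H^\infty$ is invertible in $\hc$. This also essentially proves $(ii)\Rightarrow(iii)$ and $(iii)\Rightarrow(i)$: given $(iii)$, a unimodular $\theta\in QC$ has $T_\theta$ Fredholm and $H_\theta$ compact (as $\theta,\ov\theta\in\hc$), so $\theta H^2\in Gr_{res}$; and we just saw $(i)\Rightarrow(iii)$ along the way.

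Finally, the index. With $\varphi H^2 = \theta H^2$, $\theta\in QC$ unimodular, the operator $P_+|_{\varphi H^2}: \varphi H^2\to H^2$ is unitarily equivalent (via $M_{\ov\theta}$ on the source) to $P_+M_\theta = T_\theta$ on $H^2$. Hence $ind(P_+|_{\varphi H^2}) = ind(T_\theta) = -ind(\theta)$ by Theorem \ref{same prop toep}(ii). And $ind(\theta) = ind(\varphi)$ because $\varphi = \theta f$ with $f$ invertible in $H^\infty$, so $ind(f)=0$ and the index is a homomorphism on invertibles of $\hc$. Therefore $\varphi H^2\in Gr_{res}^k$ with $k = -ind(\varphi) = -ind(\theta)$.

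The main obstacle I anticipate is the careful bookkeeping in the reduction $\varphi H^2 = \theta H^2$ and the identification of $P_\pm|_{\varphi H^2}$ with $T_\theta$ and $H_\theta$ — in particular making sure the isomorphism $M_{\ov\theta}$ is used on the correct space and that Fredholmness/index transfer correctly — together with invoking Hartman's theorem on compactness of Hankel operators, which is the one external ingredient not already recorded in the Background section and which does the real work in both directions of $(i)\Leftrightarrow(ii)$.
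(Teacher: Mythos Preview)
Your approach is essentially the same as the paper's --- Hartman's theorem for the compactness and Theorem~\ref{same prop toep} for the Fredholm part --- but with two points worth noting.

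First, the detour through the unimodular $\theta$ in proving $(i)\Leftrightarrow(ii)$ is unnecessary. You hesitate because $M_{\ov\varphi}$ is not a unitary from $\varphi H^2$ onto $H^2$ when $\varphi$ is not unimodular; but since $\varphi$ is invertible in $L^\infty$, the map $M_\varphi:H^2\to\varphi H^2$ is a bounded isomorphism with bounded inverse $M_{\varphi^{-1}}$, and that is all one needs to transfer Fredholmness and compactness. The paper does exactly this: $H_\varphi=(P_-|_{\varphi H^2})M_\varphi$ and $\ran(P_+|_{\varphi H^2})=\ran(T_\varphi)$, $\ker(P_+|_{\varphi H^2})=M_\varphi\ker(T_\varphi)$, so $P_-|_{\varphi H^2}$ compact $\Leftrightarrow H_\varphi$ compact $\Leftrightarrow\varphi\in\hc$, and $P_+|_{\varphi H^2}$ Fredholm $\Leftrightarrow T_\varphi$ Fredholm $\Leftrightarrow\varphi$ invertible in $\hc$ (given $\varphi\in\hc$). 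This is cleaner and also makes the index computation immediate: $ind(P_+|_{\varphi H^2})=ind(T_\varphi)=-ind(\varphi)$.

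Second, your argument for $\ov\theta\in\hc$ in $(i)\Rightarrow(iii)$ has a slip and a gap. The formula $(\theta H^2)^\perp=\ov\theta H^2_-$ is wrong: since $M_\theta$ is unitary, $(\theta H^2)^\perp=\theta H^2_-$. More importantly, your sketch only invokes the compactness hypothesis and never the Fredholm one, yet the latter is what actually forces $\ov\theta\in\hc$. The clean fix (which is what the paper does, for $\varphi$ rather than $\theta$) is: from $H_\theta$ compact you get $\theta\in\hc$; from $P_+|_{\theta H^2}$ Fredholm you get $T_\theta$ Fredholm; then Theorem~\ref{same prop toep}(ii) gives $\theta$ invertible in $\hc$, so $\ov\theta=\theta^{-1}\in\hc$ and $\theta\in QC$.
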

\begin{proof}
We first prove $i)\Rightarrow ii)$. We claim that the Hankel operator $H_\varphi:H^2 \to H_-^2$, $H_\varphi f=P_-(\varphi f)$, is compact if and only if $P_-|_{\varphi H^2}:\varphi H^2 \to H_-$ is compact. In fact, note that $H_\varphi f=P_- |_{\varphi H^2}(\varphi f)=P_- |_{\varphi H^2}M_\varphi f$, for all $f \in H^2$. Since $\varphi$ is invertible in $L^\infty$, $M_\varphi:H^2 \to \varphi H^2$ is an invertible operator. Thus,
$$
H_\varphi=(P_-|_{\varphi H^2})(M_\varphi|_{H^2}), \, \, \, \, H_\varphi(M_\varphi|_{H^2})^{-1}=P_-|_{\varphi H^2},
$$
which clearly implies our claim.

Suppose that $\varphi H^2 \in Gr_{res}$. Then, the operator $P_-|_{\varphi H^2}:\varphi H^2 \to H_-^2$ is compact, so we get that
$H_\varphi$ is compact. Hartman's theorem asserts that a Hankel operator $H_\varphi$ is compact if and only if $\varphi \in \hc$ (see e.g. \cite[Thm. 2.2.5]{nikolski}). Thus, it follows that $\varphi \in \hc$. Since $\varphi H^2 \in Gr_{res}$, we also have that $P_+|_{\varphi H^2}:\varphi H^2 \to H^2$ is a Fredholm operator. Note that
$\ran(P_+|_{\varphi H^2})=\ran(T_\varphi)$ and $\ker(P_+ |_{\varphi H^2})=M_\varphi \ker(T_\varphi)$, where $T_\varphi$ is the Toeplitz operator with symbol $\varphi$. Therefore $T_\varphi$ is Fredholm, and thus, $\varphi$ is invertible in $\hc$.

Now we prove $ii)\Rightarrow i)$. Assume that $\varphi$ is an invertible function in $\hc$. Then, we have that $T_\varphi$ is a Fredholm operator. By the same arguments as in the previous paragraph, we see that $P_+|_{\varphi H^2}:\varphi H^2 \to H^2$ is also a Fredholm operator. On the other hand, $\varphi \in \hc$ is equivalent to $H_\varphi$ compact. Hence
$P_-|_{\varphi H^2}:\varphi H^2 \to H_-$ is compact, and consequently, $\varphi H^2 \in Gr_{res}$.

The implication  $ii)\Rightarrow iii)$ is given by Theorem \ref{BHelson}: if $\varphi\in \hc$, then $\varphi H^2$ is singly  invariant. Therefore exists a (unique up to a multiplicative constant) unimodular function $\theta$ such that $\varphi H^2=\theta H^2$. Now $\theta=\varphi f$ for some $f\in H^2$. Since $\varphi$ is invertible in $L^{\infty}$, then $f \in H^\infty$. Hence, $\theta\in \hc$. Further, by the invertibility of $\varphi$, it clearly follows that $f$ is  invertible in $L^\infty$. Using that $\varphi H^2=\theta H^2 =\varphi f H^2$,  we get  $f H^2=H^2$, and consequently, $f$ is an outer function. Recall that a function in $H^\infty$ is invertible if and only if it is outer and invertible  in $L^\infty$. This gives  $f^{-1}\in H^{\infty}$. Now $\overline{\theta}=\theta^{-1}= \varphi^{-1} \, f^{-1}\in \hc$, which proves that $\theta\in QC$.

To prove the implication $iii)\Rightarrow ii)$, we observe that every unimodular $\theta \in QC$ is invertible in $\hc$. By the equivalence between $i)$ and $ii)$, we get $\varphi H^2=\theta H^2 \in Gr_{res}$, and hence $\varphi$ is invertible in $\hc$.

Suppose that $\varphi H^2 \in Gr_{res}^k$. To prove our claim on the index, we have pointed out   that $\ran(P_+|_{\varphi H^2})=\ran(T_\varphi)$ and $\ker(P_+ |_{\varphi H^2})=M_\varphi \ker(T_\varphi)$, where $M_\varphi$ is invertible. It follows that $k=ind(P_+|_{\varphi H^2})=ind(T_\varphi)=-ind(\varphi)$. Moreover, $ \theta= \varphi f$, and $f$ is invertible in $H^\infty$. Every invertible function in $H^\infty$ has index zero. Hence, $ind(\varphi)=ind(\theta)$.
\end{proof}

Under the identification of  each closed subspace $W\subseteq L^2$ with the orthogonal projection $P_W$, the compact restricted Grassmannian
is given by
\begin{equation}\label{gr res operators}
Gr_{res}=\{ \, P \in \b(L^2) \, : \, P-P_+ \text{ is compact}, \, P=P^2=P^*   \, \}.
\end{equation}
Applying the results mentioned in Remark \ref{geometria de subespacios} for the algebra of compact operators, it follows that the  tangent space $(TGr_{res})_P$ at some point $P \in Gr_{res}$ is given by
$$
(T Gr_{res})_P=\{\, iXP-iPX \, : \,  X^*=X \text{ is compact } \, \}.
$$
Then, using the usual operator norm, we have a Finsler metric to measure the length of curves.

On the other hand, the above presentation of $Gr_{res}$ by means of operators is related to the orthogonal projections of the $C^*$-algebra
\begin{equation}\label{ess commuti}
\b_{cc}=\{ \, T \in \b(L^2) \, : \, [T,P_+] \text{ is compact}  \, \}.
\end{equation}
Indeed, this algebra consists on operators with compact co-diagonal entries. Denoting by $\pi$ the projection onto the Calkin algebra,
the restricted Grassmannian coincides with the class of projections $P$ such that
$$
\pi(P)=\begin{pmatrix} 1 & 0 \\ 0 &  0 \end{pmatrix},
$$
where this is a matrix decomposition with respect to $\pi(P_+)$ and $\pi(P_-)$.
Metric aspects of the projections in $\b_{cc}$ for a general Hilbert space $H$ were studied in \cite{acd}. In particular, it was proved that
any pair of projections in the same connected component of $Gr_{res}$ can be joined by a geodesic of minimal length. Combining these facts and the characterization in Lemma \ref{cond fi grass res}, we have the following result.

\begin{teo}\label{index geod}
Let $\varphi, \psi$ be invertible functions in $\hc$. The following are equivalent.
\begin{itemize}
\item[i)] $ind(\varphi)=ind(\psi)$.
\item[ii)] There is a geodesic  in $Gr_{res}$ joining  $P_\varphi$ and $P_\psi$.
\item[iii)] There is unique geodesic of  minimal length  in $Gr_{res}$  joining  $P_\varphi$ and $P_\psi$ given by
$$
\delta(t)=e^{itX}P_\varphi e^{-itX}, \, \, \, \, t \in[0,1],
$$
where $X=X_{\varphi, \psi}$ is a uniquely determined compact selfadjoint operator such that $\|X\| < \pi / 2$,  $e^{iX}P_\varphi e^{-iX}=P_\psi$, and
it is co-diagonal with respect to both $P_\varphi$ and $P_\psi$.
\end{itemize}
\end{teo}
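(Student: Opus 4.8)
The plan is to establish the cycle $(i)\Rightarrow(iii)\Rightarrow(ii)\Rightarrow(i)$, the middle implication being trivial. First I would reduce to the case where $\varphi$ and $\psi$ are unimodular: by Lemma \ref{fih2 closed} and Theorem \ref{BHelson} one can write $\varphi H^2=\theta_1H^2$ and $\psi H^2=\theta_2H^2$ with $\theta_1,\theta_2$ unimodular, and exactly as in the proof of Lemma \ref{cond fi grass res} one gets $\theta_1=\varphi f_1$ and $\theta_2=\psi f_2$ with $f_1,f_2$ invertible in $H^\infty$; since invertible functions in $H^\infty$ have index zero and $ind$ is a homomorphism on the invertibles of $\hc$, we have $ind(\theta_1)=ind(\varphi)$ and $ind(\theta_2)=ind(\psi)$, while $P_\varphi=P_{\theta_1}$, $P_\psi=P_{\theta_2}$. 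Thus none of the three statements changes and we may assume $\varphi,\psi$ unimodular.

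For $(i)\Rightarrow(iii)$ I would argue as follows. Since $\hc$ is an algebra, $\varphi\ov\psi$ is invertible in $\hc$ with $ind(\varphi\ov\psi)=ind(\varphi)-ind(\psi)=0$, so by Theorem \ref{same prop toep} the Toeplitz operator $T_{\varphi\ov\psi}$ is Fredholm of index zero, hence invertible. In particular $\ker(T_{\varphi\ov\psi})=\ker(T_{\varphi\ov\psi}^*)=\ker(T_{\ov\varphi\psi})=\{0\}$, so Theorem \ref{kernels geod} provides the unique minimal geodesic $\delta(t)=e^{itX}P_\varphi e^{-itX}$ of $Gr$ joining $P_\varphi$ and $P_\psi$, with $X=X_{\varphi,\psi}$ selfadjoint, $\|X\|\le\pi/2$, $e^{iX}P_\varphi e^{-iX}=P_\psi$, and $X$ co-diagonal with respect to both $P_\varphi$ and $P_\psi$. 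Proposition \ref{norm xfisi} then gives $\|X\|=\cos^{-1}(\gamma(T_{\varphi\ov\psi}))$, and the invertibility of $T_{\varphi\ov\psi}$ forces $\gamma(T_{\varphi\ov\psi})>0$, whence the strict bound $\|X\|<\pi/2$.

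It then remains to check that $\delta$ actually runs inside $Gr_{res}$ and is minimal there. By Lemma \ref{cond fi grass res} and (\ref{gr res operators}), $P_\varphi-P_+$ and $P_\psi-P_+$ are compact, so $K:=P_\varphi-P_\psi$ is compact and selfadjoint; since $T_{\varphi\ov\psi}$ is invertible, the obstruction spaces $\varphi H^2\cap(\psi H^2)^\perp$ and $(\varphi H^2)^\perp\cap\psi H^2$ are trivial (Theorem \ref{kernels geod}), hence $\pm1\notin\sigma(K)$ and therefore $\|P_\varphi-P_\psi\|<1$. Consequently $X=-\tfrac{i}{2}\log(\epsilon_{P_\varphi}\epsilon_{P_\psi})$ with the principal branch of the logarithm (Remark \ref{geometria de subespacios}); as $\epsilon_{P_\varphi}\epsilon_{P_\psi}-I=2\epsilon_{P_\varphi}(P_\psi-P_\varphi)$ is compact and the branch vanishes at $1$, the operator $X$ is compact. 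Hence $e^{itX}-I$ is compact and $\delta(t)-P_+=(\delta(t)-P_\varphi)+(P_\varphi-P_+)$ is compact, so $\delta(t)\in Gr_{res}$ for all $t$; since the reductive connection of $Gr_{res}$ is the restriction of that of $Gr$ (Remark \ref{geometria de subespacios}, applied to the compacts), $\delta$ is a geodesic of $Gr_{res}$. Its length, measured with the same operator norm, equals $\|X\|$, and as $Gr_{res}\subseteq Gr$ isometrically and $\delta$ is minimal in $Gr$, it is minimal in $Gr_{res}$ as well; uniqueness is inherited from $Gr$. This proves $(iii)$, and $(iii)\Rightarrow(ii)$ is immediate.

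Finally, for $(ii)\Rightarrow(i)$: a geodesic of $Gr_{res}$ joining $P_\varphi$ and $P_\psi$ is a continuous path in $Gr_{res}$, and the index of the Fredholm operator $P_+|_W\colon W\to H^2$ is a locally constant function of $W$; hence it is constant along the path, so $ind(P_+|_{\varphi H^2})=ind(P_+|_{\psi H^2})$, that is $-ind(\varphi)=-ind(\psi)$ by Lemma \ref{cond fi grass res}. (Alternatively, $(i)\Rightarrow(ii)$ also follows directly from the existence result of \cite{acd}, once one observes via Lemma \ref{cond fi grass res} that $P_\varphi$ and $P_\psi$ lie in the same connected component $Gr_{res}^{-ind(\varphi)}$.) The step I expect to be most delicate is $(i)\Rightarrow(iii)$: concretely, verifying that the $Gr$-geodesic furnished by Theorem \ref{kernels geod} has compact exponent — so that it does not leave $Gr_{res}$ — and that its minimality survives the passage to the submanifold $Gr_{res}$.
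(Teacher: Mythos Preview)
Your argument is correct and essentially complete. The route, however, differs from the paper's in an instructive way: the paper disposes of both $(i)\Leftrightarrow(ii)$ and $(i)\Rightarrow(iii)$ by direct appeal to the results of \cite{acd} (Prop.~6.5 and Thm.~6.6), once it is observed that $P_\varphi$ and $P_\psi$ lie in the same component of $Gr_{res}$ and that $T_{\varphi\psi^{-1}}$ is invertible. You instead start from the $Gr$-geodesic of Theorem~\ref{kernels geod} and then \emph{prove} that it already lives in $Gr_{res}$: the key step is your observation that $P_\varphi-P_\psi$ is compact with $\pm1$ not in its spectrum, hence $\|P_\varphi-P_\psi\|<1$, so the explicit formula $X=-\tfrac{i}{2}\log(\epsilon_{P_\varphi}\epsilon_{P_\psi})$ from Remark~\ref{geometria de subespacios} applies and shows $X$ is compact. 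The paper's route is shorter because it outsources exactly this verification to \cite{acd}; your route is more self-contained and makes transparent why the strict bound $\|X\|<\pi/2$ holds (via Proposition~\ref{norm xfisi} and $\gamma(T_{\varphi\ov\psi})>0$). Both reductions to unimodular symbols and both proofs of $(ii)\Rightarrow(i)$ are the same (connected components of $Gr_{res}$ are indexed by the Fredholm index). The minor wrinkle in your exposition --- invoking Remark~\ref{geometria de subespacios} ``applied to the compacts'' for the geodesic structure of $Gr_{res}$ --- is exactly how the paper itself identifies the tangent spaces of $Gr_{res}$, so no repair is needed.
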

\begin{proof}
We first show the equivalence between $i)$ and $ii)$. Suppose that $ind(\varphi)=ind(\psi)$, so we have that $P_\varphi$ and $P_\psi$ belong to the same connected component of $Gr_{res}$.  According to \cite[Thm. 6.6]{acd} there is a (minimal) geodesic joining these projections.
The converse is obvious by the characterization of the connected components of $Gr_{res}$ in terms of the index of the functions.

Similarly, to prove the equivalence between $i)$ and $iii)$, the only non trivial part is that $i)$ implies $iii)$. If $ind(\varphi)=ind(\psi)$, then $ind(\varphi \psi^{-1})=0$, and consequently, as we state in Theorem \ref{same prop toep},  $T_{\varphi \psi^{-1}}$ is an invertible operator.
Following the same argument as in the proof of Theorem \ref{kernels geod},  but now using Lemma \ref{cond fi grass res}, we can assume that
$\varphi$, $\psi$ are unimodular functions in $QC$. Therefore, $\varphi H^2 \cap (\psi H^2)^\perp \simeq \ker(T_{\varphi \ov{\psi}})=\{ 0 \}$ and
$\psi H^2 \cap (\varphi H^2)^\perp \simeq \ker(T_{\psi \ov{\varphi}})=\{ 0 \}$. Under these conditions, there is a unique geodesic of minimal length joining $P_\varphi$ and $P_\psi$ of the desired form (see \cite[Prop. 6.5, Thm. 6.6]{acd}).
\end{proof}

\begin{rem}
As we have seen in the  proof, the above conditions are now equivalent to the invertibility of $T_{\varphi \psi^{-1}}$. A characterization of invertible Toeplitz operators is well studied, see for instance the  Widom-Devinatz theorem in \cite[Thm. 2.23]{bs90}, and \cite[Section 2, Thm. 5]{HNP} for more related results.
\end{rem}

\subsection{Representation by continuous unimodular functions}

Now we address the following question: when can we take the  quasicontinuous function $\theta$ in Lemma \ref{cond fi grass res}  to be continuous? Note that this function is unique up to a multiplicative constant.

The conditions in Lemma \ref{cond fi grass res} are also equivalent to have $\varphi H^2=g H^2$, where $g \in C$ is non-vanishing.   Indeed, this is easily seen from \cite[Corollary 165.50.1]{niko86}, which asserts that the invertibility of a function $\varphi$ in the algebra $\hc$ is equivalent to the factorization $\varphi=fg$, where $f,f^{-1} \in H^\infty$ and $g, g^{-1} \in C$. In addition, note that $ind(g)=ind(\varphi)$.
However, the function $g$ is not necessary unimodular.

 Assuming that the function $\varphi$ is continuous, we establish below a relation between   $\theta$ and $\varphi$.
Given a real valued function $u \in L^2$,  $\tilde{u}$ is the harmonic conjugate on $\T$. Denote by $Lip^\alpha$ the Banach space of    complex-valued functions on $\T$ satisfying  a Lipschitz condition of order $\alpha$ ($0<\alpha \leq 1$). We write $A=H^\infty \cap C$ for the disk algebra.

\begin{prop}
Let $\varphi \in C$ be non-vanishing, $\theta$ denote the quasicontinuous function of Lemma \ref{cond fi grass res}, and set   $u=-\log|\varphi|$, then
$$\theta= \frac{\varphi}{|\varphi|}e^{i \tilde{u}}.$$
 In particular,  $\theta \in C$, whenever $\tilde{u} \in C$. In addition, the following assertions hold.
\begin{enumerate}
 \item[i)] If $\varphi \in Lip^\alpha$ for $0< \alpha < 1$, then $\theta \in Lip^\alpha$.
\item[ii)] If $\varphi \in A$, then $\theta \in A$.
\end{enumerate}
\end{prop}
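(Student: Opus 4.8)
The plan is to start from the factorization $\varphi = \theta f$ with $f \in H^\infty$ invertible and outer that was established in the proof of Lemma \ref{cond fi grass res} (in the implication $ii)\Rightarrow iii)$), where $\theta$ is the unimodular quasicontinuous function. Since $\varphi \in C$ is non-vanishing, $\log|\varphi|$ is a well-defined real continuous function on $\T$; set $u = -\log|\varphi|$. The outer function with modulus $|f| = |\varphi|^{-1} = e^{u}$ on $\T$ is, up to a unimodular constant, the classical outer function $f = \exp\bigl(\frac{1}{2\pi}\int_0^{2\pi} \frac{e^{it}+z}{e^{it}-z} u(e^{it})\,dt\bigr)$, whose boundary values satisfy $f = e^{u + i\tilde u}$ (the Herglotz/Poisson representation together with the conjugate function), where $\tilde u$ is the harmonic conjugate of $u$. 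Hence on $\T$ one gets $\theta = \varphi / f = \varphi \, e^{-u - i\tilde u} = \varphi\, |\varphi|^{-1}\, e^{-i\tilde u}$; a sign check on which of $u$ or $-u$ to use pins down the formula $\theta = \frac{\varphi}{|\varphi|} e^{i\tilde u}$ as claimed. (This matches the Sarason representation $\theta = \chi_n e^{i(u+\tilde v)}$ recalled in Section 2, with the appropriate bookkeeping of the index-$n$ part absorbed into $\varphi/|\varphi|$.)

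Once the formula is in hand, the remaining assertions are regularity statements about the conjugation operator $u \mapsto \tilde u$ and about products and compositions. First, $\theta \in C$ whenever $\tilde u \in C$ is immediate from the formula, since $\varphi$ and $|\varphi|$ are continuous and $|\varphi|$ is bounded away from $0$, and $e^{i(\cdot)}$ is continuous; this gives the displayed ``in particular'' sentence.

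For $i)$: if $\varphi \in Lip^\alpha$ with $0<\alpha<1$, then $|\varphi| \in Lip^\alpha$ and, since $|\varphi|$ is bounded below, $u = -\log|\varphi| \in Lip^\alpha$ as well (composition of a $Lip^\alpha$ function with the Lipschitz map $t \mapsto -\log t$ on a compact interval bounded away from $0$). The key classical input is the Privalov theorem: the harmonic conjugation operator maps $Lip^\alpha$ into $Lip^\alpha$ for $0<\alpha<1$ (this is where the restriction $\alpha<1$ is essential). Hence $\tilde u \in Lip^\alpha$, so $e^{i\tilde u} \in Lip^\alpha$, $\varphi/|\varphi| \in Lip^\alpha$, and the product $\theta = \frac{\varphi}{|\varphi|} e^{i\tilde u} \in Lip^\alpha$ since $Lip^\alpha$ is a Banach algebra. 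For $ii)$: if $\varphi \in A = H^\infty \cap C$, note that $\theta f = \varphi \in H^\infty$ and $f, f^{-1} \in H^\infty$; but we also want $\theta$ continuous. Here I would argue that $\theta \in QC \cap H^\infty$ — indeed $\theta \in QC$ by Lemma \ref{cond fi grass res}, and $\theta = \varphi f^{-1}$; if moreover $\varphi \in A$ then one shows $f \in A$ (the outer function $f$ with $f,f^{-1}\in H^\infty$ attached to a continuous non-vanishing $\varphi$ is continuous — e.g. because $\log|\varphi|$ is continuous so its harmonic extension and conjugate extend continuously to $\Dc$, giving $f \in A$), whence $f^{-1} \in A$ (invertible in $H^\infty$ and continuous), and therefore $\theta = \varphi f^{-1} \in A$ as $A$ is an algebra.

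The main obstacle is the verification of the boundary-value identity $f = e^{u+i\tilde u}$ with the correct sign and the correct handling of the index: one must make sure the unimodular constant and the winding are not silently dropped, and that ``$\tilde u$'' means exactly the harmonic conjugate normalized as in the paper's Section 2 convention. The regularity parts then reduce to citing Privalov's theorem (for $i)$) and the continuity of the outer function attached to a continuous non-vanishing symbol (for $ii)$), both standard; the care is entirely in getting the explicit formula right, after which everything is routine.
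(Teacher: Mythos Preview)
Your derivation of the formula $\theta=\frac{\varphi}{|\varphi|}e^{i\tilde u}$ and your argument for $i)$ via Privalov's theorem are essentially the paper's proof (modulo a reversible slip: you write $\varphi=\theta f$ but then $|f|=|\varphi|^{-1}$, which is the relation for $\theta=\varphi f$; you correctly flag that a sign check is needed).

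The genuine gap is in $ii)$. Your proposed justification that $f\in A$ ``because $\log|\varphi|$ is continuous so its harmonic extension and conjugate extend continuously to $\Dc$'' is false as stated: continuity of a real function on $\T$ does \emph{not} imply continuity of its harmonic conjugate. Indeed, the example immediately following this proposition in the paper exhibits a continuous non-vanishing $\varphi$ (even absolutely continuous) for which $\tilde u\notin C$. So mere continuity of $\log|\varphi|$ cannot be the reason, and you have not yet used the hypothesis $\varphi\in A$ in any essential way. The paper's argument exploits $\varphi\in A$ through the nontrivial fact (cited from \cite{nikolski}) that the outer part $\varphi_{out}$ of a disk-algebra function lies in $A$; since $f^{-1}$ and $\varphi_{out}$ are both outer with modulus $|\varphi|$, they agree up to a unimodular constant, whence $\theta=\lambda\varphi_{inn}\in A$. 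Your route can be repaired along these lines, but not with the justification you gave.
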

\begin{proof}
Recalling that $\theta H^2=\varphi H^2$, and by the proof of $ii)\Rightarrow iii)$ in Lemma \ref{cond fi grass res}, one can find an invertible function $f$ in $H^\infty$ such that $\theta=f \varphi$. Since $f$ is an outer function, its harmonic extension admits a representation:
$$
\hat{f}(z)=\lambda \exp\left(\frac{1}{2\pi}\int_0^{2\pi} \frac{e^{it}+z}{e^{it}-z} \log|f(e^{it})| dt\right), \, \, \, \, \, z \in \D,
$$
for some $\lambda \in \T$; see \cite[Thm 3.9.6]{niko86}. We may assume that $\lambda=1$. Note that $\hat{f}=\exp(a+ib)$ where
$$
a(z)=\frac{1}{2\pi}\int_0^{2\pi}Re\left\{\frac{e^{it}+z}{e^{it}-z}\right\}\log|f(e^{it})|dt=\log|\hat{f}(z)|,
$$
since the real part of $(e^{it}+z)(e^{it}-z)^{-1}$ is the Poisson kernel. Since $|f|=1/|\varphi|$ on $\T$,  and $f \in H^\infty$, the following radial limit  $\lim_{r\to 1^-} a(re^{it})=\log|f(e^{it})|=u(e^{it})$ exists a.e. On the other hand,
$$
b(z)=\frac{1}{2\pi}\int_0^{2\pi} Im\left\{\frac{e^{it}+z}{e^{it}-z}\right\}\log|f(e^{it})|dt
$$
is the harmonic conjugate of $a$ on $\D$ (up to a constant). By the Privalov-Plessner theorem \cite[Thm. 6.1.1]{pavlovic}, $\lim_{r \to 1^-} b(re^{it})=\tilde{u}(e^{it})$ a.e. Since $\theta=\varphi f$ and $f=e^ue^{i\tilde{u}}=\frac{1}{|\varphi|}e^{i\tilde{u}}$, we obtain $\theta=\frac{\varphi}{|\varphi|}e^{i\tilde{u}}$.

\medskip


\medskip

\noi $i)$ Now we assume that  $\varphi \in Lip^\alpha$. Since $\varphi$ is a non-vanishing continuous function, then  $u=-\log|\varphi| \in Lip^\alpha$. By Privalov's theorem, $\tilde{u} \in Lip^\alpha$ for $\alpha<1$ (see \cite[Thm. 10.1.3]{pavlovic}). Clearly, $\varphi, |\varphi|^{-1} \in Lip^\alpha$, which yields $\theta \in Lip^\alpha$.

\medskip

\noi $ii)$ According to \cite[Section 4.3.8]{nikolski}, the outer part $\varphi_{out}$ of $\varphi$ belongs to $A$. Since $\theta=\varphi f$,
it follows that $|f^{-1}|=|\varphi_{out}|$. Therefore, $\varphi_{out}=\lambda f^{-1}$ for some $\lambda \in \T$. Thus, the inner part of $\varphi$ satisfies $\theta=\lambda \varphi_{inn}$, and thus we obtain $\theta \in A$.
\end{proof}

\begin{ejem}
In contrast to what happens with functions in  $Lip^\alpha$ or $A$, we now show that the class of absolutely continuous functions is not preserved in the above proposition. Let
$$
u(e^{it})=-\sum\limits_{n\ge 2} \frac{\sin(nt)}{n\log(n)}
$$
then $u\in C$; moreover $u$ is absolutely continuous on $\T$ \cite[p.241]{zygmund}. Let $\varphi=e^{-u}$, clearly $\varphi\in C$ is non-vanishing and absolutely continuous on $\T$. Since $u(\T)\subset \mathbb R$, we have $\varphi> 0$ on $\T$, therefore $-\log|\varphi|=u$. Let
$$
v(e^{it})=\sum\limits_{n\ge 2} \frac{\cos(nt)}{n\log(n)},
$$
and note that
$$
f(z)=\sum\limits_{n\ge 2} \frac{i}{n\log(n)}z^n=iv+u
$$
is analytic, therefore $v$ is the harmonic conjugate of $u$. But $v$  is not continuous on $\T$, not even bounded since $\sum\limits_{n\ge 2} \frac{1}{n\log(n)}=+\infty$, therefore  $\theta=e^{iv}$ is not continuous on $\T$.
\end{ejem}

\subsection{$p$-norms}

Let $\k(H,L)$ be the space of compact operators between two Hilbert spaces $H$ and $L$.
Given an operator $T \in \k(H,L)$, we denote by  $( s_n(T) )_{n\geq 1}$ the sequence of its singular values.
The $p$-Schatten class ($1\leq p< \infty$) is defined by
$$
\b_p(H,L)=\left\{  \, T \in \k(H,L) \, : \, \|T\|_p=\left(\sum_{n=1}^\infty s_n(T)^p \right)^{1/p}< \infty \, \right\}.
$$
These are Banach spaces endowed with the norm $\| \, \cdot \, \|_p$. As usual, when $p=\infty$, we set $\b_\infty(H,L)=\k(H,L)$. In particular,
 $\b_p(H,H)=\b_p(H)$ is a bilateral ideal of $\b(H)$.
Using the orthogonal decomposition $L^2=H^2\oplus H^2_-$, and the $p$-Schatten class ($1 \leq p < \infty$), one can introduce the \textit{$p$-restricted Grassmannian} $Gr_{res,p}$  as the manifold of closed linear subspaces $W\subset L^2$ such that
 \begin{itemize}
\item $P_+|_W:W\to H^2\in\b(W,H^2)$
is a Fredholm operator, and
\item $P_-|_W:W\to H_- ^2 \in \b_p(W,H_-^2)$.
\end{itemize}
Its connected components $Gr_{res,p}^k$, $k \in \bZ$, are also described by the index of the projection $P_+|_W:W\to H^2$.
The case $p=2$ was studied in connection with loop groups \cite{ps}; it is an infinite dimensional manifold with remarkable geometric properties \cite{brt, go, tump}. Other values of $1\leq p \leq \infty$, or more generally  restricted Grassmannians associated with symmetrically-normed ideals, were treated in \cite{alv, carey}.


We denote by $B_p^\alpha$ the Besov space, where  $1\leq p < \infty$ and $0 < \alpha \leq 1$. For the definition of these spaces, and  the following results we refer to B\"{o}ttcher, Karlovich and Silbermann \cite{bks07}.
In this article, among various generalizations of the classical Krein algebra,  it was introduced the following algebra defined by means of Hankel operators:
$$
K_{p,0}^{1/p,0}=\{ \,  \varphi \in L^\infty \, : \,  H_\varphi \in \b_p(H^2,H_-^2) \,  \},
$$
where $1\leq p \leq \infty$. It turns out to be a Banach algebra under the norm
$$
\| \varphi \|_{K_{p,0}^{1/p,0}}= \|\varphi\|_{L^\infty} + \|H_\varphi \|_p \, .
$$
In  the case $p=\infty$, it simply has the usual operator norm of a compact operator. By Hartman's theorem, $K_{\infty,0}^{1/\infty,0}=H^\infty + C$, and for $1\leq p < \infty$, one has $K_{p,0}^{1/p,0} \subseteq H^\infty +C$. Given a function $\varphi \in L^\infty$ and $1\leq p < \infty$, Peller's theorem states that the Hankel operator $H_\varphi \in \b_p(H^2,H^2_-)$ if and only if $P_-\varphi \in B_p^{1/p}$ (see \cite[Thm. 1.1 Appendix 5]{niko86}).
Then there is an equivalent definition of $K_{p,0}^{1/p,0}$ in terms of functions instead of operators. When $1\leq p < \infty$, it holds
$$
K_{p,0}^{1/p,0}=\{ \, \varphi \in L^\infty  \, : \, P_- \varphi \in B_p^{1/p}  \, \}=L^\infty \cap (H^\infty + B_p ^{1/p}).
$$
Moreover, when $p>1$, a function $\varphi$ is invertible in $K_{p,0}^{1/p,0}$ if and only if is invertible in $H^\infty + C$. 

Using the above stated results and the same arguments of Lemma \ref{cond fi grass res},  the following characterization   can be obtained.

\begin{coro}\label{cond fi grass res p}
Let $\varphi$ be an invertible function in $L^\infty$ and $1 \leq p < \infty$. The following assertions are equivalent:
\begin{enumerate}
\item[i)] $\varphi H^2 \in Gr_{res,p}$.
\item[ii)] $\varphi \in K_{p,0}^{1/p,0}$ and $\varphi$ is invertible in $H^\infty + C$.
\item[iii)] $\varphi H^2=\theta H^2$ for some $\theta\in QC \cap K_{p,0}^{1/p,0}$, $|\theta|=1$ a.e.
\end{enumerate}
In this case, $\varphi H^2 \in Gr_{res}^k$, where $k=-ind(\varphi)=-ind(\theta)$.
\end{coro}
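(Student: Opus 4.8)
The plan is to rerun the proof of Lemma \ref{cond fi grass res}, carrying the $p$-Schatten refinement along on top of it. Two elementary facts do all the extra work. First, $\b_p(H^2,H^2_-)$ is a two-sided operator ideal, so pre- or post-composing a $\b_p$-operator with bounded operators keeps it in $\b_p$; applied to the identity $H_\varphi=(P_-|_{\varphi H^2})(M_\varphi|_{H^2})$ already used in Lemma \ref{cond fi grass res}, together with the invertibility of $M_\varphi\colon H^2\to\varphi H^2$ (Lemma \ref{fih2 closed}), this yields $P_-|_{\varphi H^2}\in\b_p \iff H_\varphi\in\b_p \iff \varphi\in K_{p,0}^{1/p,0}$, where only the operator definition of $K_{p,0}^{1/p,0}$ is needed. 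Second, for $f\in H^\infty$ one has $H_{\varphi f}=H_\varphi T_f$ (because $P_+(fg)=fg$ for $g\in H^2$), so membership of a symbol in $K_{p,0}^{1/p,0}$ is inherited by all of its $H^\infty$-multiples. Since $\b_p\subseteq\k$ we also have $Gr_{res,p}\subseteq Gr_{res}$, which lets me invoke Lemma \ref{cond fi grass res} as a black box for everything concerning invertibility in $H^\infty+C$ and for the unimodular $QC$-representative.

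With these tools the equivalences are quick. For $i)\Rightarrow ii)$: $\varphi H^2\in Gr_{res,p}$ gives $P_-|_{\varphi H^2}\in\b_p$, hence $\varphi\in K_{p,0}^{1/p,0}$, and since $\varphi H^2\in Gr_{res}$ Lemma \ref{cond fi grass res} makes $\varphi$ invertible in $H^\infty+C$. For $ii)\Rightarrow i)$: $\varphi\in K_{p,0}^{1/p,0}$ gives $P_-|_{\varphi H^2}\in\b_p$, while invertibility of $\varphi$ in $H^\infty+C$ gives, through Lemma \ref{cond fi grass res}, that $P_+|_{\varphi H^2}$ is Fredholm; hence $\varphi H^2\in Gr_{res,p}$. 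For $ii)\Rightarrow iii)$: by the proof of $ii)\Rightarrow iii)$ of Lemma \ref{cond fi grass res} one gets the unimodular $\theta\in QC$ with $\varphi H^2=\theta H^2$ and $\theta=\varphi f$, $f$ invertible in $H^\infty$; then $H_\theta=H_\varphi T_f\in\b_p$, so $\theta\in QC\cap K_{p,0}^{1/p,0}$. For $iii)\Rightarrow ii)$: a unimodular $\theta\in QC$ is invertible in $H^\infty+C$, so $\theta$ itself satisfies $ii)$; applying $ii)\Rightarrow i)$ to $\theta$ and then $i)\Rightarrow ii)$ to $\varphi$ via $\varphi H^2=\theta H^2$ closes the loop (or, directly, $\varphi=\theta k$ with $k$ invertible in $H^\infty$, so $\varphi$ is invertible in $H^\infty+C$ and $H_\varphi=H_\theta T_k\in\b_p$). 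The index statement is then copied verbatim from Lemma \ref{cond fi grass res}: $k=ind(P_+|_{\varphi H^2})=ind(T_\varphi)=-ind(\varphi)$ using $\ran(P_+|_{\varphi H^2})=\ran(T_\varphi)$, $\ker(P_+|_{\varphi H^2})=M_\varphi\ker(T_\varphi)$ with $M_\varphi$ invertible, and $ind(\varphi)=ind(\theta)$ because they differ by the invertible $H^\infty$-factor $f$, which has index zero.

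The one point that really deserves attention — the ``main obstacle'' in spirit — is noticing what one does \emph{not} need: the delicate equivalence ``$\varphi$ invertible in $K_{p,0}^{1/p,0}$ iff $\varphi$ invertible in $H^\infty+C$'', which holds only for $p>1$, never enters, because condition $ii)$ separates membership in $K_{p,0}^{1/p,0}$ from invertibility in $H^\infty+C$. This is exactly why the corollary is valid in the full range $1\le p<\infty$. Everything else is routine bookkeeping with the ideal $\b_p$ and the factorization $H_{\varphi f}=H_\varphi T_f$.
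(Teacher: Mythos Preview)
Your proposal is correct and is exactly the approach the paper has in mind: the paper's own proof consists of the single sentence ``using the above stated results and the same arguments of Lemma~\ref{cond fi grass res}, the following characterization can be obtained.'' Your write-up is a faithful unpacking of that sentence, and your final observation --- that invertibility in $K_{p,0}^{1/p,0}$ (which only matches invertibility in $H^\infty+C$ for $p>1$) is never invoked because condition $ii)$ separates membership from invertibility --- anticipates precisely the Remark the paper places immediately after the corollary.
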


\begin{rem}
For $p>1$,  condition $ii)$ can be replaced by
\begin{enumerate}
\item[$ii\, ')$]  $\varphi$ is an invertible function in $K_{p,0}^{1/p,0}$.
\end{enumerate}
\end{rem}

The description for the compact restricted Grassmannian given in (\ref{gr res operators}) has an analogue for the $p$-restricted Grassmannian
\begin{equation}\nonumber
Gr_{res,p}=\{ \, P \in \b(L^2) \, : \, P-P_+ \in \b_p(L^2) , \, P=P^2=P^*   \, \}.
\end{equation}
The tangent space at $P \in Gr_{res,p}$ can be identified with
$$
(TGr_{res,p})_P=\{\, iXP-iPX \, : \,  X^*=X \in B_p(L^2) \, \} \subseteq \b_p(L^2).
$$
Then, a  natural Finsler metric is defined by using the $p$-norm, which gives the following length functional: for  $\alpha: [0,1] \to Gr_{res,p}$ is a piecewise $C^1$-curve,
$$
L_p(\alpha)=\int_0^1 \| \dot{\alpha}(t) \|_p \, dt.
$$
The geodesics defined in (\ref{geo g}) are also minimal for the $p$-norm (see \cite[Corol. 27]{alv}). Thus, we can use the same ideas of Theorem
\ref{index geod} to prove the following (note that $ind(\varphi)=ind(\psi)$ forces $\|P_\varphi-P_\psi\|<1$ by previous remarks):

\begin{coro}
Let $1\leq p < \infty$, and let $\varphi, \psi$ be functions in $K_{p,0}^{1/p,0}$
which are invertible  in $\hc$. The following are equivalent:
\begin{itemize}
\item[i)] $ind(\varphi)=ind(\psi)$.
\item[ii)] There is a geodesic  in $Gr_{res,p}$ joining  $P_\varphi$ and $P_\psi$.
\item[iii)] There is unique geodesic of  minimal length  in $Gr_{res,p}$  joining  $P_\varphi$ and $P_\psi$ given by
$$
\delta(t)=e^{itX}P_\varphi e^{-itX}, \, \, \, \, t \in[0,1],
$$
where $X=X_{\varphi, \psi}$ is a uniquely determined selfadjoint operator such that $\|X\| < \pi / 2$,  $e^{iX}P_\varphi e^{-iX}=P_\psi$, and
it is co-diagonal with respect to both $P_\varphi$ and $P_\psi$.
\end{itemize}
\end{coro}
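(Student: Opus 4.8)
The plan is to run the argument of Theorem \ref{index geod} almost verbatim, replacing its geometric input from \cite{acd} by \cite[Corol. 27]{alv}, which ensures that the curves (\ref{geo g}) are minimal for the $p$-norm. The implication iii)$\Rightarrow$ii) is trivial, and ii)$\Rightarrow$i) goes exactly as there: a geodesic is a continuous curve inside $Gr_{res,p}$, so it stays in one connected component, and since $P_\varphi\in Gr_{res,p}^{-ind(\varphi)}$ and $P_\psi\in Gr_{res,p}^{-ind(\psi)}$ by Corollary \ref{cond fi grass res p}, this forces $ind(\varphi)=ind(\psi)$.

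The real content is i)$\Rightarrow$iii). First, as in the proof of Theorem \ref{kernels geod}, I would use Corollary \ref{cond fi grass res p} to reduce to the case where $\varphi,\psi$ are unimodular functions in $QC\cap K_{p,0}^{1/p,0}$: passing to the corresponding $\theta$'s alters neither $P_\varphi,P_\psi$ nor the indices. Assuming $ind(\varphi)=ind(\psi)$, the unimodular function $\varphi\ov{\psi}$ is invertible in $\hc$ with $ind(\varphi\ov{\psi})=0$, so by Theorem \ref{same prop toep} the operator $T_{\varphi\ov{\psi}}$ is Fredholm of index $0$, hence invertible; in particular $\ker(T_{\varphi\ov{\psi}})=\ker(T_{\ov{\varphi}\psi})=\{0\}$ and $\gamma(T_{\varphi\ov{\psi}})>0$. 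Proposition \ref{norm xfisi} then applies, giving $\ran(P_\varphi)\cap\ker(P_\psi)\simeq\ker(T_{\varphi\ov{\psi}})=\{0\}$, $\ker(P_\varphi)\cap\ran(P_\psi)\simeq\ker(T_{\ov{\varphi}\psi})=\{0\}$, and $\|X_{\varphi,\psi}\|=\cos^{-1}(\gamma(T_{\varphi\ov{\psi}}))<\pi/2$; reading off Halmos' model, $\|P_\varphi-P_\psi\|=\sin\|X_{\varphi,\psi}\|<1$. Thus Remark \ref{geometria de subespacios} supplies the unique selfadjoint $X=X_{\varphi,\psi}$ with $\|X\|<\pi/2$, co-diagonal with respect to $P_\varphi$ (and, via Halmos' model, also with respect to $P_\psi$), such that $P_\psi=e^{iX}P_\varphi e^{-iX}$, namely $X=-\frac{i}{2}\log(\epsilon_{P_\varphi}\epsilon_{P_\psi})$.

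The point that is genuinely new compared with Theorem \ref{index geod} is the check that the whole construction remains inside the Schatten category, and that is the step I would handle with most care. Since $\epsilon_{P_\varphi}-\epsilon_{P_+}=2(P_\varphi-P_+)$ and $\epsilon_{P_\psi}-\epsilon_{P_+}=2(P_\psi-P_+)$ lie in $\b_p(L^2)$ and $\epsilon_{P_+}^2=I$, the operator $\epsilon_{P_\varphi}\epsilon_{P_\psi}-I$ belongs to $\b_p(L^2)$, the latter being an ideal; as $\|\epsilon_{P_\varphi}\epsilon_{P_\psi}-I\|=2\|P_\varphi-P_\psi\|<2$, the series for $\log(I+(\epsilon_{P_\varphi}\epsilon_{P_\psi}-I))$ converges in the Banach ideal $\b_p(L^2)$, so in fact $X\in\b_p(L^2)$. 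Consequently $\delta(t)=e^{itX}P_\varphi e^{-itX}$ satisfies $\delta(t)-P_+\in\b_p(L^2)$ for every $t$, hence $\delta$ is a curve in $Gr_{res,p}$; being of the form (\ref{geo g}) with $\|X\|<\pi/2$, it is a geodesic of $Gr_{res,p}$ which is minimal for the $p$-norm by \cite[Corol. 27]{alv}, and the strict inequality $\|X\|<\pi/2$ makes it the unique minimal geodesic joining $P_\varphi$ and $P_\psi$. I expect the only delicate issue to be essentially bookkeeping: verifying that the geodesic distinguished in \cite{alv} coincides with the one produced by the analytic logarithm of Remark \ref{geometria de subespacios}, and that the uniqueness assertion persists for the $p$-norm Finsler metric; no new analytic difficulty arises once $\|P_\varphi-P_\psi\|<1$ is in hand.
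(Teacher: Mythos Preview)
Your proposal is correct and follows essentially the same route the paper sketches: reduce to unimodular symbols via Corollary \ref{cond fi grass res p}, use $ind(\varphi)=ind(\psi)$ to get invertibility of $T_{\varphi\ov{\psi}}$ and hence $\|P_\varphi-P_\psi\|<1$, and then invoke \cite[Corol.~27]{alv} for $p$-minimality. Your explicit verification that $X\in\b_p(L^2)$ via the logarithm series is a welcome detail that the paper leaves to the reader.
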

Moreover, arguing as in the proof of Theorem \ref{teo des toeplitz} we also obtain
\begin{coro}
Let $1\leq p < \infty$, and let $\varphi, \psi$ be functions in $K_{p,0}^{1/p,0}$
which are invertible  in $\hc$, such that $ind(\varphi)=ind(\psi)$. Then if $\theta\in K_{p,0}^{1/p,0}$ is such that $e^{i\theta}=\varphi \ov{\psi}$,
$$
\|M_\theta P_+-P_+M_\theta\|_p\ge 2^{1/p}\| \cos^{-1}(|T_{\varphi\ov{\psi}}|)\|_p=dist_p(P_\varphi,P_\psi).
$$
For instance, if $\varphi$ and $\psi$ are $C^1$ functions (with equal index) such an argument $\theta$ exists,
which is continuous and piecewise smooth.
\end{coro}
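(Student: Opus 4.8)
The plan is to run the proof of Theorem~\ref{teo des toeplitz} once more, now measuring lengths with the $p$-norm $\|\cdot\|_p$. As a preliminary reduction (exactly as in the proofs of Theorems~\ref{kernels geod} and~\ref{index geod}, using Corollary~\ref{cond fi grass res p}) I would assume that $\varphi,\psi$ are unimodular functions in $QC\cap K_{p,0}^{1/p,0}$; then the argument $\theta$ with $e^{i\theta}=\varphi\ov{\psi}$ is real valued, and since $K_{p,0}^{1/p,0}$ is a Banach algebra, $e^{it\theta}\in K_{p,0}^{1/p,0}$ for all $t\in\R$. Note that $ind(\varphi)=ind(\psi)$ forces $ind(\varphi\ov{\psi})=0$, so by Theorem~\ref{same prop toep} the operator $T_{\varphi\ov{\psi}}$ is invertible; in particular all kernels appearing in Proposition~\ref{norm xfisi} are trivial and $\|P_\varphi-P_\psi\|<1$.

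The first step is to compute $dist_p(P_\varphi,P_\psi)$. By the preceding corollary there is a unique minimal geodesic $\delta(t)=e^{itX}P_\varphi e^{-itX}$ with $X=X_{\varphi,\psi}\in\b_p(L^2)$ selfadjoint, co-diagonal with respect to $P_\varphi$ and $\|X\|<\pi/2$; by \cite[Corol.~27]{alv} this geodesic is also minimizing for the $p$-norm, so $dist_p(P_\varphi,P_\psi)=L_p(\delta)$. Since $\dot\delta(t)=e^{itX}[iX,P_\varphi]e^{-itX}$, $e^{itX}$ is unitary, and $[X,P_\varphi]$ has the same singular values as $X$ (both are of the form $A\pm A^*$ with $A=(1-P_\varphi)XP_\varphi$ and $A^2=(A^*)^2=0$, so $|A\pm A^*|^2=AA^*+A^*A$), we get $\|\dot\delta(t)\|_p\equiv\|X\|_p$ and $L_p(\delta)=\|X\|_p$. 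Finally, Halmos' model as in Proposition~\ref{norm xfisi} gives $X=\left(\begin{smallmatrix}0 & iZ\\ -iZ & 0\end{smallmatrix}\right)$ on the generic part of $P_\varphi,P_\psi$ and $X=0$ elsewhere, with $Z=M_\varphi\cos^{-1}(|T_{\varphi\ov{\psi}}|)M_{\ov{\varphi}}$; since $|X|^2=Z^2\oplus Z^2$, the singular values of $X$ are those of $Z$ with multiplicity two, whence $\|X\|_p=2^{1/p}\|Z\|_p=2^{1/p}\|\cos^{-1}(|T_{\varphi\ov{\psi}}|)\|_p$ (using that $M_\varphi$ is unitary; this quantity is finite because $\varphi\ov{\psi}\in K_{p,0}^{1/p,0}$ forces $H_{\varphi\ov{\psi}}\in\b_p$, and $\cos^{-1}(|T_{\varphi\ov{\psi}}|)$ has singular values comparable to those of $|H_{\varphi\ov{\psi}}|$). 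This proves the equality $dist_p(P_\varphi,P_\psi)=2^{1/p}\|\cos^{-1}(|T_{\varphi\ov{\psi}}|)\|_p$.

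The second step reproduces the competing curve of Theorem~\ref{teo des toeplitz}. I would set $\alpha(t)=M_{e^{it\theta}}P_\varphi M_{e^{-it\theta}}$; since $e^{it\theta}\in K_{p,0}^{1/p,0}$ is unimodular it lies in $QC$, so $e^{it\theta}\varphi H^2\in Gr_{res,p}$ by Corollary~\ref{cond fi grass res p}, and $\alpha(t)-P_+$ varies continuously with $t$ in $\b_p(L^2)$, so $\alpha$ stays in the connected component of $P_\varphi$; moreover $\alpha(0)=P_\varphi$ and $\alpha(1)=M_{\varphi\ov{\psi}}P_\varphi M_{\ov{\varphi}\psi}=P_\psi$. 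The same differentiation as in Theorem~\ref{teo des toeplitz} gives $\dot\alpha(t)=iM_{e^{it\theta}}M_\varphi(M_\theta P_+-P_+M_\theta)M_{\ov{\varphi}}M_{e^{-it\theta}}$, and unitarity of $M_{e^{it\theta}}$ and $M_\varphi$ yields $\|\dot\alpha(t)\|_p\equiv\|M_\theta P_+-P_+M_\theta\|_p$, hence $L_p(\alpha)=\|M_\theta P_+-P_+M_\theta\|_p$. Since the minimal geodesic is no longer than $\alpha$, $dist_p(P_\varphi,P_\psi)=\|X\|_p\le L_p(\alpha)=\|M_\theta P_+-P_+M_\theta\|_p$, which together with the first step is the assertion. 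For the last sentence, if $\varphi,\psi$ are $C^1$ unimodular functions with $ind(\varphi)=ind(\psi)$ then $\varphi\ov{\psi}$ is a $C^1$ unimodular function of index $0$, hence null-homotopic, so it admits a continuous real lift $\theta$ with $e^{i\theta}=\varphi\ov{\psi}$ which is piecewise smooth (being the lift of a $C^1$ map), and such $\theta$ is regular enough that $H_\theta\in\b_p(H^2)$ for every $p$, i.e. $\theta\in K_{p,0}^{1/p,0}$.

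The only genuinely new ingredients beyond Theorem~\ref{teo des toeplitz} are the $p$-minimality of the geodesics of $Gr_{res,p}$ (cited from \cite{alv}) and the extraction of the factor $2^{1/p}$ from the $2\times2$ block form of $X$ in Halmos' model; checking that $\alpha$ does not leave $Gr_{res,p}$ uses only that $K_{p,0}^{1/p,0}$ is a Banach algebra. I expect the block-form computation of $\|X\|_p$ to be the main (though still routine) point, while everything else is the bookkeeping of Theorem~\ref{teo des toeplitz} with $\|\cdot\|$ replaced by $\|\cdot\|_p$.
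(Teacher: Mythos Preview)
Your proposal is correct and follows essentially the same approach as the paper: the paper's proof only spells out the block computation $|X_{\varphi,\psi}|=\left(\begin{smallmatrix}Z & 0\\ 0 & Z\end{smallmatrix}\right)$ to obtain $\|X_{\varphi,\psi}\|_p=2^{1/p}\|Z\|_p=2^{1/p}\|\cos^{-1}(|T_{\varphi\ov{\psi}}|)\|_p$, while delegating the inequality $\|M_\theta P_+-P_+M_\theta\|_p\ge \|X_{\varphi,\psi}\|_p$ to the phrase ``arguing as in the proof of Theorem~\ref{teo des toeplitz}''. Your write-up makes explicit the steps the paper leaves implicit (in particular that $L_p(\delta)=\|X\|_p$ via $|[X,P_\varphi]|=|X|$, and that $\alpha$ stays in $Gr_{res,p}$), and you also justify the final sentence about $C^1$ functions, which the paper does not prove; otherwise the two arguments coincide.
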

\begin{proof}
 Recall from Poposition \ref{norm xfisi} that
 $$
 X_{\varphi, \psi}=\left( \begin{array}{cc} 0 & iZ \\ -iZ & 0 \end{array}\right)
 $$
 and thus ($Z\ge 0$)
 $$
 |X_{\varphi, \psi}|=\left( \begin{array}{cc} Z & 0 \\ 0 & Z \end{array}\right) .
 $$
 Also $Z=M_\varphi cos^{-1}(|T_{\varphi\ov{\psi}}|) M_{\ov{\varphi}}$. Then
 $$
 \|X_{\varphi, \psi}\|_p=2^{1/p}\|Z\|_p=2^{1/p}\|cos^{-1}(|T_{\varphi\ov{\psi}}|)\|_p
 \qedhere $$
\end{proof}

\section{Shift-invariant subspaces of $H^2$}


The orthogonal projections of the $C^*$-algebra $\b_{cc}$ defined in (\ref{ess commuti}) may be classified using their
image in the Calkin algebra. In addition to the restricted Grassmannian, we shall need to consider the essential class  $\mathbb{E}_1$ consisting of all the orthogonal projections which have the form
(in terms of $\pi(P_+)$ and $\pi(P_-)$)
$$
\pi(P)=\begin{pmatrix} p & 0 \\ 0 &  0 \end{pmatrix},
$$
where $p\neq 0,1$ is a projection in the Calkin algebra.  It was shown that the class $\mathbb{E}_1$ is connected, and in contrast to the restricted Grassmannian, there are projections which cannot be joined by a geodesic in $\mathbb{E}_1$.

Let $E$ be a closed subspace of $L^2$ such that $M_{\chi_1}(E)\subset E$. If $0\neq E \subseteq H^2$, then $E=\varphi H^2$ for some inner function $\varphi$. We prove below that these subspaces belong to either the restricted Grassmannian or the essential class $\mathbb{E}_1$.

\begin{teo}\label{producto blaschke}
 Let $\varphi$ be an inner function. Then the following assertions hold:
\begin{enumerate}
\item[i)] $\varphi$ is  a finite Blaschke product if and only if $P_\varphi \in Gr_{res}^k$, where $k$ is the number of zeros  of $\varphi$.
\item[ii)] $\varphi$ is not  a finite Blaschke product if and only if $P_\varphi \in \mathbb{E}_1$.
\end{enumerate}
\end{teo}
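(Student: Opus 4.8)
The statement characterizes inner functions $\varphi$ by whether $P_\varphi$ lies in $Gr_{res}$ (with index $k$ = number of zeros) or in $\mathbb{E}_1$. The plan is to reduce both parts to the compactness and Fredholmness of the compressed projections $P_+|_{\varphi H^2}$ and $P_-|_{\varphi H^2}$, which by the claim proved inside Lemma \ref{cond fi grass res} is governed by the Hankel operator $H_\varphi$ and the Toeplitz operator $T_\varphi$. Since $\varphi$ is inner, $T_\varphi$ is an isometry (its adjoint $T_{\ov\varphi}$ has kernel $H^2\ominus\varphi H^2$), so $P_+|_{\varphi H^2}$ is always injective with closed range $\varphi H^2$, and its cokernel is $H^2\ominus\varphi H^2=\ker T_\varphi^*$. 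Hence $P_+|_{\varphi H^2}$ is Fredholm if and only if $\dim(H^2\ominus\varphi H^2)<\infty$, and in that case its index is $-\dim(H^2\ominus\varphi H^2)$.

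\emph{Part i).} First I would recall the classical fact that, for an inner function $\varphi$, the model space $H^2\ominus\varphi H^2$ is finite dimensional if and only if $\varphi$ is a finite Blaschke product, in which case $\dim(H^2\ominus\varphi H^2)$ equals the number $k$ of zeros of $\varphi$ (counting multiplicity); this follows for instance from the explicit orthonormal basis of the model space, or from the fact that $\varphi$ finite Blaschke $\Leftrightarrow$ $\varphi$ is a rational inner function $\Leftrightarrow$ $\varphi\in C$. Combined with the previous paragraph, $P_+|_{\varphi H^2}$ is Fredholm of index $-k$ precisely when $\varphi$ is a finite Blaschke product with $k$ zeros. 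It remains to observe that in this case $P_-|_{\varphi H^2}$ is automatically compact: indeed $\varphi$ finite Blaschke implies $\varphi\in C\subset\hc$, so $H_\varphi$ is compact by Hartman's theorem (as used in Lemma \ref{cond fi grass res}), and by the claim there $P_-|_{\varphi H^2}$ is compact too. Therefore $\varphi$ finite Blaschke with $k$ zeros $\Rightarrow P_\varphi\in Gr_{res}$ with $ind(P_+|_{\varphi H^2})=-k$, i.e.\ $P_\varphi\in Gr_{res}^{-k}$. (One should double-check the sign convention against the statement: the theorem says $P_\varphi\in Gr_{res}^k$ with $k$ the number of zeros, so presumably the index referred to is that of $P_+|_{\varphi H^2}:\varphi H^2\to H^2$ read with the opposite orientation, consistent with $ind(T_\varphi)=-k$; I would align the bookkeeping with Lemma \ref{cond fi grass res}, where $k=-ind(\varphi)$.) Conversely, if $P_\varphi\in Gr_{res}$ then in particular $P_+|_{\varphi H^2}$ is Fredholm, forcing $\dim(H^2\ominus\varphi H^2)<\infty$ and hence $\varphi$ a finite Blaschke product; the index count then pins down $k$.

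\emph{Part ii).} This is essentially the contrapositive packaging of Part i) together with Hartman's theorem again. Suppose $\varphi$ is inner but not a finite Blaschke product. Then $\dim(H^2\ominus\varphi H^2)=\infty$, so $\pi(P_+|_{\varphi H^2})$ — equivalently, the compression $\pi(P_\varphi)$ read as a $2\times2$ matrix against $\pi(P_+),\pi(P_-)$ — has diagonal $(1,1)$-corner equal to $\pi(T_\varphi T_\varphi^*)$, a projection $p$ in the Calkin algebra with $p\neq 1$ (since the cokernel is infinite dimensional) and $p\neq 0$ (since $\varphi H^2$ is infinite dimensional, as $\varphi\not\equiv 0$). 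To fit the form required for $\mathbb{E}_1$, I must also show the off-diagonal corners of $\pi(P_\varphi)$ vanish and the $(2,2)$-corner is $0$. The $(1,2)$ and $(2,1)$ corners are the images of $P_+P_\varphi P_-$ and $P_-P_\varphi P_+$; writing $P_\varphi=M_\varphi P_+M_{\ov\varphi}$ these reduce to compressions involving the Hankel operator $H_{\ov\varphi}$ (or $H_\varphi$), which are compact exactly when $\varphi\in\hc$. Here is the one genuinely delicate point, and I expect it to be the main obstacle: a general inner function $\varphi$ need not lie in $\hc$ (e.g.\ an infinite Blaschke product whose zeros cluster nontangentially at several points of $\T$), so $H_\varphi$ need not be compact, and then the off-diagonal corners of $\pi(P_\varphi)$ are nonzero — which would contradict membership in $\mathbb{E}_1$ as defined. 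I therefore need to recheck the definition of $\mathbb{E}_1$: the intended class should consist of projections $P$ with $\pi(P)$ of that block-diagonal shape \emph{with respect to a possibly different orthogonal decomposition}, or else $\mathbb{E}_1$ is defined inside $\b_{cc}$ and the claim implicitly restricts to $\varphi$ for which $[P_\varphi,P_+]$ is compact. If the latter, then Part ii) should be read as: among subspaces $\varphi H^2$ with $\varphi$ inner lying in $\b_{cc}$ (equivalently $\varphi\in\hc$, equivalently $H_\varphi$ compact), the non–finite-Blaschke ones are exactly those in $\mathbb{E}_1$; and then the proof is: $\varphi\in\hc$ gives compact off-diagonal corners and compact $(2,2)$-corner ($P_-|_{\varphi H^2}$ compact), so $\pi(P_\varphi)=\mathrm{diag}(p,0)$, and $p\neq0,1$ by the infinite-dimensionality of both $\varphi H^2$ and its orthocomplement. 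The converse, $P_\varphi\in\mathbb{E}_1\Rightarrow\varphi$ not finite Blaschke, is immediate since finite Blaschke lands in $Gr_{res}$ by Part i) and $Gr_{res}\cap\mathbb{E}_1=\varnothing$. So the real work is to get the definitions to line up and to invoke Hartman's theorem at the right moment; once that is settled, both parts are short.
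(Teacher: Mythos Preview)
Your Part i) is fine and essentially matches the paper's argument, which phrases the same dichotomy as ``the only inner functions invertible in $\hc$ are finite Blaschke products'' and then invokes Lemma~\ref{cond fi grass res}; your route through $\dim(H^2\ominus\varphi H^2)$ is equivalent. The sign bookkeeping you flag is handled in the paper exactly via Lemma~\ref{cond fi grass res} and Remark~\ref{index comput}.

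In Part ii) your ``genuinely delicate point'' is a phantom. You have forgotten the most basic consequence of $\varphi$ being inner: $\varphi H^2\subset H^2$, hence $P_\varphi\le P_+$, hence $P_-P_\varphi=P_\varphi P_-=0$ \emph{identically}. In particular $[P_\varphi,P_+]=0$, so $P_\varphi\in\b_{cc}$ always, and in the $2\times 2$ block picture the off-diagonal corners and the $(2,2)$ corner of $P_\varphi$ are zero on the nose, not merely compact. Your worry that ``a general inner function $\varphi$ need not lie in $\hc$'' is simply false: $\varphi\in H^\infty\subset\hc$ and $H_\varphi=0$. (The Hankel operator that can fail to be compact is $H_{\ov\varphi}$, but it plays no role here.) Once this is observed, Part ii) is immediate: $\pi(P_\varphi)=\mathrm{diag}(p,0)$ with $p=\pi(P_\varphi|_{H^2})$; $p\neq 0$ because $\varphi H^2$ is infinite dimensional, and $p\neq 1$ because $H^2\ominus\varphi H^2$ is infinite dimensional when $\varphi$ is not a finite Blaschke product. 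The paper argues in exactly this spirit (showing the $(1,1)$ block is neither compact nor Fredholm), so once you drop the spurious obstacle your outline coincides with it.
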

\begin{proof}
$i)$ The only inner functions which are invertible in $\hc$ are the finite Blaschke products  (see e.g. \cite[Thm. 5]{s32}). Therefore, the result follows  from Lemma \ref{cond fi grass res}. The index of a Blaschke factor is equal to its number of zeros (see Remark \ref{index comput}), and as we have already showed, it determines the connected component of $Gr_{res}$ where $P_\varphi$ lies.

\medskip

\noi $ii)$ Suppose that $\varphi$ is not a finite Blaschke product. As we remarked in the preceding item, this means that $\varphi$ is not invertible in $\hc$. Therefore, $P_\varphi \notin Gr_{res}$ by Lemma \ref{cond fi grass res}.
On the other hand, by the claim proved in the first paragraph of the same lemma, we know that $P_- P_\varphi |_{\varphi H^2 }: \varphi H^2 \to H_- ^2$ is compact, since $\varphi \in H^\infty$. Hence $a^*=P_-P_\varphi |_{H^2}$ is also compact, so that $P_\varphi \in {\b}_{cc}$. Similarly, we also find that $y=P_-P_\varphi|_{H_- ^2}$ is compact. Now recall that a projection
$$
P=\left( \begin{array}{ll} x & a \\ a ^*&  y \end{array} \right),
$$
belongs to $G_{res}$ if and only if $a,y$ are compact operators and $x$ is Fredholm (see \cite[Lemma 3.3]{acd}). Applying this to $P=P_\varphi$  we obtain that $x=P_+P_\varphi |_{H^2}$ is not Fredholm. In order to prove that $P_\varphi \in \mathbb{E}_1$, it only remains to verify that $x$ is not compact. To this end, it suffices to show that $\dim \ker(x-1)=\infty$.  But since $\varphi \in H^\infty$, we have
$\ker(x- 1)=H^2 \cap \varphi H^2 =\varphi H^2$, which  has infinite dimension. The converse is an immediate consequence of Lemma \ref{cond fi grass res} and the characterization of invertible inner functions in $\hc$.
\end{proof}

\begin{rem}
 Every geodesic in $\mathbb{E}_1$ is a geodesic in $Gr$. This follows by the explicit form of geodesics in a general $C^*$-algebra described in Remark \ref{geometria de subespacios}. However, the converse does not hold: geodesics in $Gr$ joining two projections of $\mathbb{E}_1$ may lie outside of $\mathbb{E}_1$. Suppose that $P, Q \in \mathbb{E}_1$, and there is a geodesic $\delta(t)=e^{itX}Pe^{-itX}$ in $Gr$ joining these projections. Then $\delta(t)$ belongs to $\mathbb{E}_1$ if and only if $X \in \b_{cc}$  (see \cite[Prop 6.11]{acd} for other equivalent conditions).
\end{rem}



\subsection{Examples}

We shall give examples of shift-invariant subspaces which can or cannot be joined by a (minimal) geodesic in the Grassmann manifold $Gr$.
The simplest case  is a consequence
of the following result  proved in \cite[Lemma 3.2]{MP05} for Hardy spaces of the upper half-plane.
It is an elementary but important step to understand Toeplitz kernels. We shall state it for the Hardy space of the  circle.

\begin{lem}\label{basic criterion}
Let $\varphi, \psi$ be two inner functions. Then $\ker(T_{\varphi \overline{\psi}})\neq \{ 0 \}$ if and only if
there exist an inner function $\theta$ and an outer function $g$ such that
$
 \varphi \, \theta  \, g=\psi \overline{g}
$ on $\T$.
\end{lem}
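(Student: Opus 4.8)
The plan is to characterize membership in $\ker(T_{\varphi\ov{\psi}})$ directly in terms of the factorization condition, working on the circle with the representation of $H^2$ as a subspace of $L^2$. Recall that for $g \in H^2$, saying $g \in \ker(T_{\varphi\ov{\psi}})$ means $P_+(\varphi\ov{\psi}g) = 0$, i.e. $\varphi\ov{\psi}g \in H^2_- = \chi_{-1}\ov{H^2}$. Equivalently, $\varphi\ov{\psi}g = \chi_{-1}\ov{h}$ for some $h \in H^2$, which (taking conjugates and using $|\varphi|=|\psi|=1$ a.e.) can be rearranged as $\ov{\varphi}\psi\ov{g} = \chi_1 h$, or more symmetrically $\varphi g \cdot \ov{\psi \ov{g}}^{-1}$-type manipulations. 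The cleanest route: $g \in \ker(T_{\varphi\ov{\psi}})$, $g \neq 0$, iff $\varphi \ov{\psi} g \in H^2_-$, iff $\psi \ov{g} \cdot \ov{\varphi} \in \ov{H^2_-}\cdot$(unimodular)$ = \chi_1 H^2$... I would instead phrase it as: $\varphi\ov{\psi} g \in H^2_-$ iff $\ov{\varphi\ov{\psi}g} \in \ov{H^2_-} = \chi_1 H^2$, iff $\ov{\varphi}\psi\ov{g} = \chi_1 k$ for some $k \in H^2$; but it is neater to observe that $\varphi g$ and $\psi$-related quantities are in $H^2$.

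The key algebraic step I would carry out is this. Suppose $g \in \ker(T_{\varphi\ov{\psi}})$, $g \neq 0$. Let $g = g_{inn} g_{out}$ be its inner-outer factorization, and set $\theta_1 = g_{inn}$, so $g = \theta_1 g_{out}$ with $g_{out}$ outer. Then $\varphi\ov{\psi}g \in H^2_-$, so write $\varphi\ov{\psi}g = \ov{\psi}\varphi g$; since $\varphi g \in H^2$ and $\varphi g = \varphi\theta_1 g_{out}$ with $\varphi\theta_1$ inner, we have $\ov{\psi}(\varphi\theta_1 g_{out}) \in H^2_-$. Taking complex conjugates: $\psi \ov{\varphi}\ov{\theta_1}\,\ov{g_{out}} \in \ov{H^2_-} = \chi_1 H^2 \subseteq H^2$. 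I would then want to use that $\ov{g_{out}}$ times an inner function lands in $H^2$, which forces, via the standard fact that if $uh \in H^2$ with $u$ unimodular and $h \in \ov{H^2}$ outer then... — this is where one must be careful. The clean statement is: $\varphi\ov{\psi}g \in H^2_-$ and $g = \theta_1 g_{out}$ imply $\varphi\theta_1 g_{out} = \psi \ov{g_{out}} \cdot (\text{something})$. Concretely, $\ov{\psi}\varphi\theta_1 g_{out} \in H^2_-$ means $\ov{\psi}\varphi\theta_1 g_{out} = \ov{\chi_1 h_{out} h_{inn}}$ for suitable $h \in H^2$; matching moduli a.e. gives $|g_{out}| = |h|$ a.e., and matching the inner parts after clearing denominators should yield exactly $\varphi \theta g = \psi \ov{g}$ on $\T$ with $\theta = \theta_1 \cdot (\text{another inner factor from } h_{inn})$ and $g = g_{out}$ (the outer function in the statement). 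For the converse, given $\varphi\theta g = \psi\ov{g}$ with $\theta$ inner, $g$ outer, I would check that $\theta g \in \ker(T_{\varphi\ov{\psi}})$: indeed $\theta g \in H^2$ and $\varphi\ov{\psi}(\theta g) = \varphi\ov{\psi}\theta g = \ov{\psi}(\varphi\theta g)/\ov{\psi}\cdot\ov{\psi}$... more directly $\varphi\ov{\psi}\theta g = \ov{\psi}\ov{\psi}^{-1}\cdot$, so from $\varphi\theta g = \psi\ov g$ we get $\ov\psi \varphi \theta g = \ov\psi\psi\ov g = \ov g \in \ov{H^2}$, and since it also equals $\ov\psi \cdot (\varphi\theta g)$ with no positive frequencies beyond... actually $\ov g \in \ov{H^2}$ and we need it in $H^2_-$; but $\ov g$ need not vanish at $0$. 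The fix: use $\chi_1$ factor — replace $\theta$ by $\chi_1\theta$ or note $T_{\varphi\ov\psi}(\theta g)$ lands in $H^2_-$ precisely when the resulting $\ov g$-type expression has no constant term, which one arranges by absorbing a $\chi_1$.

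The main obstacle I anticipate is handling the inner-outer bookkeeping rigorously: ensuring that the inner function $\theta$ produced genuinely absorbs all the inner factors on both sides and that $g$ can be taken outer without loss of generality, and conversely that the constructed kernel element is nonzero and actually lies in the Hardy space (the $\chi_1$/constant-term subtlety distinguishing $\ov{H^2}$ from $H^2_-$). I would resolve this by systematically passing everything to $\T$, using $|\varphi| = |\psi| = |\theta| = 1$ a.e. to rewrite conjugates, invoking part iii) of Theorem \ref{BHelson} or the inner-outer factorization to control which pieces are inner versus outer, and invoking the uniqueness of inner-outer factorization to match factors. Since the paper cites this as \cite[Lemma 3.2]{MP05} proved for the upper half-plane, I would remark that the circle version follows by the same argument (or by the standard conformal transfer between the two settings), so a brief self-contained proof sketch along the above lines, emphasizing the equivalence $\varphi\ov\psi g \in H^2_- \iff \varphi(\text{inner part of }g)(\text{outer part of }g) = \psi\ov{(\text{outer part})}\cdot(\text{inner})$ on $\T$, suffices.
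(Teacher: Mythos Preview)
The paper does not prove this lemma at all: it is quoted from \cite[Lemma 3.2]{MP05}, where it is stated and proved for the Hardy space of the upper half-plane, and the authors simply ``state it for the Hardy space of the circle.'' So there is no proof in the paper to compare your attempt against.

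Your forward direction is essentially correct. If $0\ne f\in\ker(T_{\varphi\ov\psi})$, factor $f=\theta_1 g$ (inner--outer); then $\varphi\ov\psi f\in H^2_-$ reads $\ov{\varphi}\psi\ov{\theta_1}\,\ov g\in \chi_1 H^2$, and matching moduli ($|g|=|h|$ a.e.) together with uniqueness of the outer part yields $\psi\ov g=\varphi\theta g$ with $\theta=\lambda\,\theta_1\chi_1 h_{inn}$ inner. This is exactly what you sketched.

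For the converse you flagged the $\chi_1$/constant-term issue, and you were right to worry: as literally transcribed to the disk the ``if'' direction is \emph{false} when $\theta$ is allowed to be a unimodular constant. Take $\varphi=b_{-1/2}$, $\psi=\chi_1$, $\theta=1$, $g(z)=1+z/2$. Then on $\T$ one checks $\varphi\theta g=z+\tfrac12=\psi\ov g$, yet $\varphi\ov\psi=b_{-1/2}\ov{\chi_1}$ is continuous with winding number $0$, so $T_{\varphi\ov\psi}$ is invertible and its kernel is $\{0\}$. In the half-plane this does not arise because $H^2_+\cap\ov{H^2_+}=\{0\}$, so $P_+\ov g=0$ automatically; on the circle one must exclude constant $\theta$ (equivalently, require $\theta$ nonconstant, or $\chi_1\mid\theta$, which the forward direction always produces).

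Once $\theta$ is nonconstant the converse goes through with an explicit kernel element that your sketch did not quite reach: set $f=(1-\ov{\theta(0)}\,\theta)g\in H^2$. Then
\[
\varphi\ov\psi f=\ov{\theta g}-\ov{\theta(0)}\,\ov g=\ov{(\theta-\theta(0))g},
\]
and since $(\theta-\theta(0))g\in\chi_1 H^2$ (it vanishes at $0$), this lies in $H^2_-$. Moreover $f\ne 0$ precisely because $\theta$ is not a constant. Your instinct to ``absorb a $\chi_1$'' was the right diagnosis; this is the clean way to carry it out.
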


\begin{ejem}
Suppose that $ \varphi$ divides $\psi$. This means that there is an inner function $\theta$ such that
$\varphi \theta=\psi.$
 Thus, the equation in Lemma \ref{basic criterion} is satisfied with $g=1$, and  consequently, $\ker(T_{\varphi\overline{\psi}})\neq\{ \, 0 \, \}$. Hence there is no  geodesic  in $Gr$ joining $\varphi H^2$ and $\psi H^2$. Note that $\ker(T_{\ov{\varphi}\psi})=\{ \, 0 \, \}$.
In this case, it is not difficult to construct concrete examples using the following well-known description of divisors in $H^\infty$. Suppose that $\{ a_j \}$ and $\{  a_j ' \}$ are the zero sets of $\varphi$ and $\psi$, respectively. If $\varphi=\lambda b s_\mu $ and $\psi= \lambda' b' s_{\mu'}$ are the canonical factorizations, then
$\varphi$ divides $\psi$ if and only if  $\{ a_j \} \subseteq \{  a_j ' \}$ and $\mu \leq \mu'$.
\end{ejem}

 The canonical factorization factorization also turns out to be relevant  to give an affirmative answer to the existence of a geodesic in many concrete cases. Let $\varphi$ be an inner function. A point  on $\T$ belongs to the \textit{support} of $\varphi$ if it is a limit point of zeros  of $\varphi$ or if it belongs to the support of the singular measure associated with the singular factor of $\varphi$. We write $supp(\varphi)$ for the support of $\varphi$. Sarason and Lee proved the following \cite[Thm. 1-2]{slee}.

\begin{teo}
Let $\varphi$, $\psi$ be inner functions.
\begin{enumerate}
\item[i)] If $supp(\varphi)\neq supp(\psi)$, then the spectrum of $T_{\varphi \ov{\psi}}$ is the closed unit disk.
\item[ii)] If there is a point $z_0 \in supp(\psi) \setminus supp(\varphi)$, then $T_{ \varphi \ov{\psi}}- \lambda$ has dense range for all $\lambda$.
\end{enumerate}
\end{teo}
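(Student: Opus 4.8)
The plan is to treat both items as statements about the spectrum of the contraction $T_{\varphi\ov{\psi}}$ (note $\|T_{\varphi\ov{\psi}}\|=\|\varphi\ov{\psi}\|_\infty=1$, so $\sigma(T_{\varphi\ov{\psi}})\subseteq\overline{\mathbb{D}}$ automatically), reducing $(i)$ to computing the approximate point spectrum and $(ii)$ to ruling out eigenvalues of the adjoint. Since $\sigma(T_{\psi\ov{\varphi}})=\overline{\sigma(T_{\varphi\ov{\psi}})}$ and $\overline{\mathbb{D}}$ is invariant under conjugation, for $(i)$ I may assume without loss of generality that the distinguishing point satisfies $z_0\in supp(\psi)\setminus supp(\varphi)$; the opposite case follows by passing to $T_{\psi\ov{\varphi}}=(T_{\varphi\ov{\psi}})^*$. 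Two classical facts about inner functions drive the argument: first, for $w\in\mathbb{D}$ the reproducing kernel $k_w(z)=(1-\ov{w}z)^{-1}$ is an eigenvector of $T_{\ov{\psi}}$, namely $T_{\ov{\psi}}k_w=\ov{\psi(w)}\,k_w$; second, because $z_0\in supp(\psi)$, the cluster set of $\psi$ at $z_0$ is the whole disk $\overline{\mathbb{D}}$, whereas $z_0\notin supp(\varphi)$ means $\varphi$ continues analytically and unimodularly across a neighbourhood of $z_0$, so in particular $\varphi$ is continuous at $z_0$ with $|\varphi(z_0)|=1$.

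For item $(i)$ I would prove $\sigma(T_{\varphi\ov{\psi}})=\overline{\mathbb{D}}$ by exhibiting, for every $\lambda\in\overline{\mathbb{D}}$, a normalized sequence of approximate eigenvectors concentrated at $z_0$. Put $\hat k_w=k_w/\|k_w\|$; since $|\hat k_w|^2\,\tfrac{d\theta}{2\pi}$ is exactly the Poisson measure at $w$, it localizes at $z_0$ as $w\to z_0$, so that $\|(\eta-\eta(z_0))\hat k_w\|_2\to0$ for any bounded $\eta$ continuous at $z_0$. Writing $\varphi=\varphi(z_0)+(\varphi-\varphi(z_0))$ and using $T_{\ov{\psi}}\hat k_w=\ov{\psi(w)}\hat k_w$ one obtains
$$
\big\|\,(T_{\varphi\ov{\psi}}-\varphi(z_0)\ov{\psi(w)})\,\hat k_w\,\big\|\le \|(\varphi-\varphi(z_0))\hat k_w\|_2\longrightarrow 0\qquad(w\to z_0).
$$
Given $\lambda\in\overline{\mathbb{D}}$, write $\lambda=\varphi(z_0)\ov{c}$ with $c\in\overline{\mathbb{D}}$ (possible since $|\varphi(z_0)|=1$); as the cluster set of $\psi$ at $z_0$ is all of $\overline{\mathbb{D}}$, I choose $w_n\to z_0$ with $\psi(w_n)\to c$, whence $\varphi(z_0)\ov{\psi(w_n)}\to\lambda$ and $\|(T_{\varphi\ov{\psi}}-\lambda)\hat k_{w_n}\|\to0$. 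Thus $\lambda$ is an approximate eigenvalue, giving $\overline{\mathbb{D}}\subseteq\sigma(T_{\varphi\ov{\psi}})\subseteq\overline{\mathbb{D}}$ and proving $(i)$.

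For item $(ii)$, observe that $T_{\varphi\ov{\psi}}-\lambda$ has dense range if and only if $\ker\big((T_{\varphi\ov{\psi}}-\lambda)^*\big)=\ker(T_{\ov{\varphi}\psi}-\ov{\lambda})=\{0\}$, so the assertion for all $\lambda$ is equivalent to $T_{\ov{\varphi}\psi}$ having empty point spectrum. The eigenvalue $\nu=0$ is covered by Lemma \ref{basic criterion}: $\ker(T_{\psi\ov{\varphi}})\neq\{0\}$ would force a factorization $\psi\,\theta\,g=\varphi\,\ov{g}$ with $\theta$ inner and $g$ outer, which I would rule out using that $\varphi$ (hence $\ov{\varphi}$) extends analytically across $z_0$ while $\psi$ is singular there. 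For a general eigenvalue $\nu$, writing $T_{\ov{\varphi}\psi}=T_{\ov{\varphi}}T_\psi$ and setting $g:=\psi f$, the equation $T_{\ov{\varphi}\psi}f=\nu f$ becomes $(\ov{\varphi}-\nu\ov{\psi})g\in H^2_-$, i.e. $g\in\ker(T_{\ov{\varphi}-\nu\ov{\psi}})$; now the symbol is no longer unimodular and Lemma \ref{basic criterion} does not apply directly. The plan here is a pseudocontinuation argument: multiplying by $\varphi$ gives $\psi f-\nu\varphi f=\varphi h$ with $h\in H^2_-$, and on an arc $I\ni z_0$ where $\ov{\varphi}$ is analytic and nonvanishing this equates an interior $H^2$–function with the nontangential boundary values of a function holomorphic in the exterior disk; gluing the two across $I$ (edge-of-the-wedge/Morera) produces an analytic continuation incompatible with $z_0\in supp(\psi)$.

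I expect the main obstacle to be precisely this last continuation step for $\nu\neq0$: one must convert the membership $(\ov{\varphi}-\nu\ov{\psi})\,\psi f\in H^2_-$ into a genuine two-sided analytic continuation at $z_0$ and extract from it that $\psi$ would extend across $z_0$, contradicting $z_0\in supp(\psi)$. This requires controlling the zeros of the eigenfunction $f$ near $z_0$ and justifying the matching of $L^2$ boundary values on $I$. The localized reproducing-kernel calculation used for $(i)$ only produces \emph{approximate} eigenvectors and does not by itself forbid genuine eigenvectors, which is exactly why the rigidity furnished by pseudocontinuation (and, presumably, the functional model underlying \cite{clark,slee}) is needed for $(ii)$.
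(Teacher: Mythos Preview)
The paper does not actually prove this theorem: it is quoted verbatim from Lee and Sarason \cite[Thm.~1--2]{slee} and used only as input to Example~\ref{Toep iny rgo d no inv}. There is therefore no in-paper proof to compare your proposal against.

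On the merits of your proposal itself: your argument for item~$(i)$ is essentially correct and is, in fact, the approach of the original Lee--Sarason paper---normalized reproducing kernels $\hat k_w$ with $w\to z_0$ give approximate eigenvectors, the continuity of $\varphi$ at $z_0$ kills the error term, and the full cluster set of $\psi$ at $z_0\in supp(\psi)$ lets you hit every $\lambda\in\overline{\mathbb D}$. One small point: you should state explicitly that the cluster set of an inner function at a point of its support equals $\overline{\mathbb D}$; this is classical (Frostman/Seidel) but is the key ingredient.

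Your treatment of item~$(ii)$, by contrast, is an outline with a self-acknowledged gap. You correctly reduce the dense-range statement to showing that $T_{\overline{\varphi}\psi}$ has no eigenvalues, and you correctly rewrite the eigenvalue equation as $(\overline{\varphi}-\nu\overline{\psi})\,\psi f\in H^2_-$. But the ``gluing across $I$'' step is exactly where the work lies, and as written it is not a proof: you have not said why the putative analytic continuation forces $\psi$ itself (rather than some combination involving $f$) to extend across $z_0$, nor have you dealt with possible zeros of $f$ accumulating at $z_0$. The original argument in \cite{slee} does proceed via pseudocontinuation, but it organizes the continuation more carefully: one shows that a nontrivial eigenfunction would force $\overline{\psi}$ to admit a pseudocontinuation across an arc at $z_0$, which contradicts $z_0\in supp(\psi)$. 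Your sketch points in the right direction but does not yet close this.
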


\noi From the above result and Theorem \ref{kernels geod} we obtain this example.

 \begin{ejem}\label{Toep iny rgo d no inv}
 Let $\varphi$, $\psi$ be inner functions. Suppose that there are two points $z_0$ and $z_1$ such that  $z_0 \in supp(\psi) \setminus supp(\varphi)$ and  $z_1 \in supp(\varphi) \setminus supp(\psi)$. Then there is unique minimal geodesic in $Gr$ joining $P_\varphi$ and $P_\psi$ of the form stated in  Theorem \ref{kernels geod}.
\end{ejem}

Now we consider the case of two inner functions with support $z=1$. As a direct consequence of the results on Toeplitz kernels obtained by Makarov, Mitkovski and  Poltoratski \cite{MP05, MiP10}  (see also the survey \cite{kernels}), one can show examples of the two inner functions of the aforementioned type such that their corresponding subspaces can or cannot be joined by a geodesic in $Gr$. These remarkable results were proved for Toeplitz operators in Hardy spaces of the upper-half plane (and other classes of functions). For this reason, we shall change to the half-plane; however
by the isometry exhibited below all can be translated to the disk.

\medskip

A function $F$ holomorphic on the upper half-plane $\mathbb{C}_+=\{ \, z \, : Im z >0 \, \}$ belongs to the
Hardy  space $H^2_+=H^2(\C_+)$ if
$$
\|F\|_{H^2_+}:=\left(\sup_{y>0}\int_{-\infty} ^\infty |F(x+iy)|^2 dx \right)^{1/2} <\infty.
$$
As in Hardy spaces of the disk, one may consider $H^2_+$ as a Hilbert subspace of $L^2(\R)$ since non tangencial limits exist a.e. No confusion will arise if we also denote by $P_+$  the orthogonal projection of $L^2(\R)$ onto $H^2_+$. The Toeplitz operator with symbol $U \in L^\infty(\R)$ is defined by
$$
T_U:H^2_+ \to H^2_+, \, \, \, \, T_U(F):=P_+(UF).
$$
We write $H^\infty_+=H^\infty(\C_+)$ for the bounded holomorphic functions on $\C_+$. Notice that $w=\frac{z-i}{z+i}$
is a conformal map from $\C_+$ onto $\D$. Set $f(w)=F(z)$. Then, it follows that $F(z) \in H^\infty_+$ if and only if
$f(w) \in H^\infty$. However, $H^2_+$ is not obtained from $H^2$ by conformal mapping. It can be shown
that $f(w) \in H^2 $ if and only if $\frac{\pi^{-1/2}}{(z+i)} F(z) \in H^2_+$. Taking  boundary values, one sees that
$$
W:H^2 \to H^2_+, \, \, \, \, \, Wf(x)=\frac{\pi^{-1/2}}{(x+i)} \, f\left(\frac{x-i}{x+i}\right), \, \, \, x \in \R.
$$
is an isometry from $H^2$ onto $H^2_+$. Set $\gamma(x)=\frac{x-i}{x+i}$ and fix $\theta \in L^\infty$. Then,   Toeplitz operators in the Hardy spaces of the disk and the upper half-plane are related by
$$
W T_\theta = T_{\theta  \circ \gamma} W\, .
$$
The canonical factorization of functions in $H^2$ can be also derived in $H^2_+$ using the isometry $W$.

\medskip

By an inner function $\Theta$ in $\C_+$ we mean that $\Theta \in H^\infty_+$ and $|\Theta|=1$ on $\R$. An inner function $\Theta(z)$ in $\C_+$ is  a \textit{meromorphic inner function} if it has a meromorphic extension to $\C$. In this case, the meromorphic extension to the lower half-plane is given by $\Theta(z)=\frac{1}{\overline{\Theta(\overline{z})}}$.
Each meromorphic inner function $\Theta$ admits a canonical  factorization
$\Theta=B_\Lambda S^a$, where $a\geq 0$ and $\Lambda$ is a discrete set in $\C_+$ without accumulation points on $\R$ such that the following Blaschke condition holds
$$
\sum_{\lambda \in \Lambda} \frac{Im \lambda}{1+|\lambda|^2} < \infty.
$$
The function $B_\Lambda$ is the corresponding Blaschke product in $\C_+$, i.e.
$$
B_\Lambda (z)= \prod_{\lambda \in \Lambda} \epsilon_\lambda \frac{z-\lambda}{z-\overline{\lambda}}; \, \, \, |\epsilon_\lambda|=1.
$$
The other function in the factorization is given by the singular inner function
$S^a(z)=e^{iaz}$.  Meromorphic inner functions correspond to inner functions in $H^2$ such that $z=1$ is the only possible accumulation point of their zeros and also the only possible singular point mass.

\begin{ejem}
The \textit{point spectrum} of a meromorphic inner function $\Theta=B_\Lambda S^a$ is the set $\sigma(\Theta)=\{ \, \Theta=1 \,  \}$ or $\{ \, \Theta=1 \,  \}\cup \{ \infty \}$. The point $\infty$ belongs to the spectrum if $\sum_{\lambda \in \Lambda} Im \lambda < \infty$ and $S^a\equiv 1$ (see \cite{MP05} for other equivalent conditions).  Two meromorphic inner functions are said to be \textit{twins} if they have the same point spectrum, possibly including infinity.
The twin inner function theorem asserts that if $\Theta$, $J$ are twins, then $\ker(T_{\ov{\Theta} J})=\{0\}$ \cite[Thm. 3.20]{MP05}. Thus, there is always a geodesic joining the corresponding subspaces defined by twin functions.
\end{ejem}

\begin{ejem}
 Recall that a sequence of real numbers  is separated if $|\lambda_n - \lambda_m|\geq \delta >0$ $(n \neq m)$.
A separated sequence $( \lambda_n)_{n \in \mathbb{Z}}$ is a called a P\'olya sequence if every zero-type entire function bounded on $( \lambda_n)_{n \in \mathbb{Z}}$  is constant (see also \cite{MiP10} for a new characterization). Among several  conditions, it was proved in \cite[Thm. A]{MiP10} that $( \lambda_n  )_{n \in \mathbb{Z}}$ is a P\'olya sequence if and only if there exists a meromorphic inner function $\Theta$  with $\{  \, \Theta=1\,\}=( \lambda_n  )_{n \in \mathbb{Z}}$ such that $\ker(T_{\overline{\Theta}S^{2c}})\neq \{ 0 \}$ for some $c>0$. Hence there is no geodesic joining the corresponding subspaces defined by $\Theta$ and $S^{2c}$.
\end{ejem}

\section*{Acknowledgements} This research was supported by CONICET (PIP 2016 112201) and ANPCyT (2010 2478). We would like to thank Daniel Su\'arez for his helpful insight on Toeplitz operators.

{\sc (Esteban Andruchow)} {Instituto de Ciencias,  Universidad Nacional de Gral. Sar\-miento,
J.M. Gutierrez 1150,  (1613) Los Polvorines, Argentina and Instituto Argentino de Matem\'atica, `Alberto P. Calder\'on', CONICET, Saavedra 15 3er. piso,
(1083) Buenos Aires, Argentina.}

\noi e-mail: {\sf eandruch@ungs.edu.ar}

\bigskip

{\sc (Eduardo Chiumiento)} {Departamento de de Matem\'atica, FCE-UNLP, Calles 50 y 115, 
(1900) La Plata, Argentina  and Instituto Argentino de Matem\'atica, `Alberto P. Calder\'on', CONICET, Saavedra 15 3er. piso,
(1083) Buenos Aires, Argentina.}     
                                               
\noi e-mail: {\sf eduardo@mate.unlp.edu.ar}

\bigskip

{\sc (Gabriel Larotonda)} {Instituto de Ciencias,  Universidad Nacional de Gral. Sar\-miento,
J.M. Gutierrez 1150,  (1613) Los Polvorines, Argentina and Instituto Argentino de Matem\'atica, `Alberto P. Calder\'on', CONICET, Saavedra 15 3er. piso,
(1083) Buenos Aires, Argentina.}

\noi e-mail: {\sf glaroton@ungs.edu.ar }

\end{document}